\newtheorem{theorem}{Theorem}[section]
\newtheorem{lemma}[theorem]{Lemma}
\newtheorem{corollary}[theorem]{Corollary}
\newtheorem{example}[theorem]{Example}
\theoremstyle{definition}
\newtheorem{definition}[theorem]{Definition}
\newtheorem{remark}[theorem]{Remark}
\numberwithin{equation}{section}
\begin{document}

\title[Maximal regularity for fractional difference equations]
{{\bf Maximal regularity for fractional difference equations with finite delay on UMD spaces}}

\author{Jichao Zhang,~~ Shangquan Bu$^\sharp$ }

\address{Jichao Zhang: School of Science, Hubei University of Technology, Wuhan 430068, China}
\email{156880717@qq.com (Jichao Zhang) \;}
\address{ Shangquan Bu:  Department of Mathematical Sciences, University of Tsinghua, Beijing 100084, China}
 \email{bushangquan@tsinghua.edu.cn (Shangquan Bu) \;}

\thanks{$^\sharp$ The corresponding author.}

 \thanks{$^1$ This work was supported by the NSF of China (Grant No. 12171266)}

\begin{abstract}  In this paper, we  study the $\ell^p$-maximal regularity for the fractional difference equation with finite delay:
$$\begin{cases}
 \Delta^{\alpha}u(n)=Au(n)+\gamma u(n-\lambda)+f(n), \ n\in \mathbb N_0, \lambda \in \mathbb N, \gamma \in \mathbb R; \\
u(i)=0,\ \ i=-\lambda, -\lambda+1,\cdots, 1,  2,
\end{cases}$$
where $A$ is a bounded linear operator  defined on  a Banach space $X$, $f:\mathbb N_0\rightarrow X$ is an $X$-valued sequence and $2<\alpha<3$.  We  introduce an operator theoretical method based on the  notion of $\alpha$-resolvent sequence of bounded linear operators, which gives an explicit representation of solution. Further, using Blunck's operator-valued Fourier multipliers theorems on $\ell^p(\mathbb{Z}; X)$, we completely characterize the  $\ell^p$-maximal regularity of solution when $1 < p < \infty$ and $X$ is a UMD space.
\end{abstract}

\keywords{Fractional difference equations; Maximal regularity; UMD spaces; R-boundedness; Finite delay}

\subjclass[2010]{Primary 34A08; Secondary 35R11, 39A06,  39A14, 43A22.}

\maketitle

\section{Introduction}
The purpose of this paper is to study the existence and  uniqueness of solution and  the $\ell^p$-maximal regularity  for the fractional difference equation with finite delay:
\begin{equation}
 \Delta^{\alpha}u(n)=Au(n)+\gamma u(n-\lambda)+f(n), \ n\in \mathbb N_0, \lambda \in \mathbb N, \gamma \in \mathbb R
\end{equation}
with the initial conditions $u(i)=0$ when $i=-\lambda, -\lambda+1,\cdots, 1,  2$,
where $A$ is a bounded linear operator defined on a Banach space $X$, $f:\mathbb N_0\rightarrow X$ is  an $X$-valued sequence,  $2<\alpha<3$ and $1< p<\infty$. Here, we denote by $\mathbb{N}_0$ the set of all non negative integers,  $\Delta^{\alpha}$  denotes the discrete fractional operator of order $\alpha>0$ in the Riemann-Liouville sense (see Definition 2.1 in the second section).

The equation (1.1) has been the subject of research of  many authors due to its applications in many fields of sciences such as fractional nonlocal continuum mechanics and physics \cite{ta1, wu}. Much literature have been devoted to such problem \cite{gi, li5, le}. For instance, when $0<\alpha\leq1$, Lizama and Murillo-Arcila \cite{li5} studied the  $\ell^p$-maximal regularity  for  (1.1) with the initial conditions $u(i)=0$ when $i=-\lambda, -\lambda+1,\cdots, -1, 0$, and they  proved that when the underlying Banach space $X$ is a UMD space, $1<p< \infty$, $\sup_{n\in\mathbb N_0}\|S_\alpha(n)\|<\infty$  and $\{z^{1-\alpha}(z-1)^\alpha-\gamma z^{-\lambda}: |z|=1, z\not= 1\}\subset \rho (A)$, then (1.1) has  $\ell^p$-maximal regularity if and only if the sets $$\big\{z^{1-\alpha}(z-1)^\alpha[z^{1-\alpha}(z-1)^\alpha-\gamma z^{-\lambda}-A]^{-1}: |z|=1, z\neq 1\big\}$$  and $$\big\{ z^{-\lambda}[z^{1-\alpha}(z-1)^\alpha-\gamma z^{-\lambda}-A]^{-1}: |z|=1, z\neq 1\big\}$$ are $R$-bounded,
where $S_\alpha (n)\in B(X)$ is an appropriate  sequence defined the parameters $\alpha, \ \gamma,\ \lambda$ and $A$. Later, when $1<\alpha\leq2$, Leal, Lizama and Murillo-Arcila \cite{le}  further considered the $\ell^p$-maximal regularity  for  (1.1) with the initial  conditions $u(i)=0$ when $ i=-\lambda, -\lambda+1,\cdots, 0,  1$,  they  have shown that when  $X$ is a UMD space, $1<p< \infty$, $\sup_{n\in\mathbb N_0}\|S_\alpha(n)\|<\infty$  and  $\{z^{2-\alpha}(z-1)^\alpha-\gamma z^{-\lambda}: |z|=1, z\neq 1\} \subset \rho (A)$, then (1.1) has the $ \ell^p$-maximal regularity if and only if the sets $$\big\{z^{2-\alpha}(z-1)^\alpha[z^{2-\alpha}(z-1)^\alpha-\gamma z^{-\lambda}-A]^{-1}: |z|=1, z\neq 1\big\}$$  and $$\big\{ z^{-\lambda}[z^{2-\alpha}(z-1)^\alpha-\gamma z^{-\lambda}-A]^{-1}: |z|=1, z\neq 1\big\}$$ are $R$-bounded, where $S_\alpha (n)\in B(X)$ is also an appropriate  sequence defined $\alpha, \ \gamma,\ \lambda$ and $A$.  However, the validity of such characterization for the case $2<\alpha<3$ was left as an open problem. The aim of this paper is to give a positive answer to this problem.

In order to obtain our main results. We first introduce a similar sequence of bounded linear operators $S_\alpha(n)$ defined by  the parameters $\alpha, \ \gamma,\ \lambda$ and $A$,  called the $\alpha$-resolvent sequence $(S_\alpha(n))_{n\in \mathbb{N}_0}$, which will give an explicit representation of solution for (1.1). Precisely, let $2<\alpha<3$ and
 let $S_\alpha(-\lambda)= S_\alpha(-\lambda+1)=\cdots= S_\alpha(-1)=0$, $S_\alpha(0)= S_\alpha(1)= S_\alpha(2)= I$, and
\begin{align*}
S_\alpha(n+3) &- 2S_\alpha (n+2) + S_\alpha (n+1)= A(k^{\alpha-2}*S_\alpha)(n) +\gamma (k^{\alpha-2}*S_\alpha^\lambda)(n)\\
&+ k^{\alpha-2}(n+3)I+ (1-\alpha)k^{\alpha-2}(n+2)I+ \frac{(\alpha-1)(\alpha-2)}{2}k^{\alpha-2}(n+1)I
\end{align*}
 when $n\in \mathbb N_0$, $\lambda \in \mathbb N$ and $\gamma \in \mathbb R$, where $S_\alpha^\lambda(n)=S_\alpha(n-\lambda)$ and $ k^{\alpha -2}$ is defined by (2.4) below.
We show that  when $f: \mathbb N_0\to X$ is given, there exists a unique solution $u: \mathbb N_0\to X$ of (1.1) which is given by the formula
\begin{align*}
u(n)=(h_\alpha* S_\alpha*f)(n-3)
\end{align*}
when $n\geq3$. Here the function $h_\alpha: \mathbb N_0\rightarrow \mathbb R$ is defined by $h_\alpha(0)=1, h_\alpha(1)=\alpha-1$, $h_\alpha(2)=\alpha(\alpha-1)/2$ and
\begin{equation}\nonumber
h_\alpha(n+3)+(1-\alpha) h_\alpha(n+2)+ (\alpha-1)(\alpha-2)h_\alpha(n+1)/2=0
\end{equation}
when $n\in \mathbb{N}_0$.

We show that when the underlying Banach space $X$ is a UMD space, $1<p< \infty$, $\sup_{n\in\mathbb N_0}\|S_\alpha(n)\|<\infty$ and  $\{z^{3-\alpha}(z-1)^\alpha-\gamma z^{-\lambda}: |z|=1, z\not= \pm 1\}\subset \rho (A)$, then
(1.1) has the $\ell^p$-maximal regularity if and only if
the sets
\begin{align}
\big\{z^{3-\alpha}(z-1)^\alpha[z^{3-\alpha}(z-1)^\alpha-\gamma z^{-\lambda}-A]^{-1}: |z|=1, z\neq \pm1\big\}
\end{align}
and
\begin{align}
\big\{z^{-\lambda}[z^{3-\alpha}(z-1)^\alpha-\gamma z^{-\lambda}-A]^{-1}: |z|=1, z\neq \pm1\big\}
\end{align}
are $R$-bounded.

It is clear that the $R$-boundedness of the sets (1.2) and (1.3) do not depend on the space parameter $p$. Thus  when the underlying Banach space $X$ is a UMD space, $\sup_{n\in\mathbb N_0}\|S_\alpha(n)\|<\infty$, $\{z^{3-\alpha}(z-1)^\alpha-\gamma z^{-\lambda}: |z|=1, z\not= \pm1\}\subset \rho (A)$, if (1.1) has the $\ell^p$-maximal regularity for some $1<p< \infty$, then (1.1) has  the $\ell^p$-maximal regularity for all $1<p< \infty$.  Since every norm bounded subset of $B(X)$ is actually $R$-bounded when $X$ is a Hilbert space, we deduce that when the underlying Banach space $X$ is a Hilbert space,  $1<p< \infty$, $\sup_{n\in\mathbb N_0}\|S_\alpha(n)\|<\infty$  and $\{z^{3-\alpha}(z-1)^\alpha-\gamma z^{-\lambda}: |z|=1, z\not=\pm 1\}\subset \rho (A)$, then (1.1) has the $\ell^p$-maximal regularity if and only if the sets (1.2) and (1.3) are norm bounded.

We notice that the fractional difference equation (1.1) in the case $2<\alpha<3$ and $\gamma =0$ was previously studied in \cite{zh}. Our results
recover the explicit formula of solution and the characterization of $\ell^p$-maximal regularity for (1.1) obtained in \cite{zh}. The main tool in this paper is the operator-valued Fourier multipliers theorems on $\ell^p(\mathbb{Z}; X)$  obtained by Blunck \cite{bl}, we will transform the $\ell^p$-maximal regularity of  (1.1) to an operator-valued Fourier multiplier problem on $\ell^p(\mathbb{Z}; X)$.

We notice that the research in this paper was motivated by recent studies on discrete mathematical models proved that they can serve as a new microstructural basis for fractional nonlocal continuum mechanics and physics \cite{ta, ta1}. Fractional difference equations can be also used to formulate adequate models in nanomechanics  \cite{ta1, wu} and therefore further studies in this class of fractional difference equations deserve to be investigated. Our contribution in this paper provides a new qualitative advance in this line of research, that incorporates tools from operator theory and allows the analysis of
maximal regularity for a very general but still simple model. The study of more complex dynamical systems that include unbounded operators is still an open problem. This task will be the objective of forthcoming works.

This paper is organized as follows: in section 2, we recall some basic concepts and results in the existing literature related to fractional difference operators, UMD spaces, $R$-boundedness,   the discrete time Fourier transform and Blunck's Fourier multipliers theorems that will be later used; in section 3, we study the $\alpha$-resolvent sequence defined by the parameters $\alpha, \ \gamma,\ \lambda$ and $A$, and give an explicit representation of solution for (1.1); in the last section, we give a characterization of the $\ell^p$-maximal regularity for equation (1.1) when $1 < p <\infty$ and $X$ is a UMD space.

\section{Preliminaries}
In this section, we   briefly recall some necessary notions and results  related to fractional difference operators,  UMD spaces, $R$-boundedness, the discrete time Fourier transform and Blunck's Fourier multipliers theorems, which will be used in the sequel.

Let $X$ be a Banach space. We denote by $S(\mathbb N_0; X)$ the vector space consisting of all  vector-valued sequences $u:\mathbb N_0\rightarrow X$. Similarly we denote by $S(\mathbb Z; X)$ the vector space  consisting of all vector-valued sequences $u:\mathbb Z\rightarrow X$.
The forward Euler operator $\Delta: S(\mathbb N_0; X)\rightarrow S(\mathbb N_0; X)$ is defined as follows:
\begin{align*}
\Delta u(n)= u(n+1)-u(n)
\end{align*}
when $ n\in \mathbb N_0$.
For every $m \in \mathbb N$, we define recursively the m-th order forward difference operator $\Delta^m: S(\mathbb N_0;X)\rightarrow S(\mathbb N_0;X)$ by
$\Delta^m=\Delta^{m-1} \Delta$.

Recall that the finite convolution $*$ between two sequences $f\in S(\mathbb N_0; \mathbb{C})$ and $g\in S(\mathbb N_0;X)$ is defined by
\begin{equation}
(f*g)(n):=\sum_{j=0}^{n}f(n-j)g(j)
\end{equation}
when $n\in \mathbb N_0$. It is easy to verify that if $h\in S(\mathbb N_0; \mathbb{C})$, then
 \begin{equation}
 ((f*h)*g)(n) = (f*(h*g))(n) = \sum_{i+j+k =n} f(i)h(j)g(k)
 \end{equation}
 when $n\in\mathbb{N}_0$. Hence we will use the expression $f*g*h$ without any confusion.

The following definition of fractional sum, due to the previous works \cite{ab, at1},  was formally  presented  by Lizama in \cite{li}.
Let $0 <\beta \leq 1$ and $u\in S(\mathbb N_0; X)$ be given. The fractional sum of $u$ of order $\beta$ is defined by
\begin{equation}
\Delta^{-\beta}u(n):= (k^\beta * u)(n) = \sum_{j=0}^n k^{\beta}(n-j)u(j)
\end{equation}
when $n\in \mathbb N_0$,
where
\begin{align}
k^{\beta}(j):=\frac{\Gamma(\beta+j)}{\Gamma(\beta)\Gamma(j+1)}
\end{align}
when $j\in \mathbb N_0$, where $\Gamma$ is the Gamma function. It is easy to verify  that this sequence $(k^{\beta}(j))_{n\in \mathbb{N}_0}$ satisfies the following semigroup property
\begin{align}
(k^{\beta} * k^{\alpha})(n)= k^{\beta+\alpha}(n)
\end{align} when $n\in \mathbb N_0, \beta, \alpha\in \mathbb{C}$.

 The next definition corresponds to an analogous version of fractional derivative in the sense of Riemann-Liouville, see \cite{ag, mi}.

\begin{definition} Let $\alpha>0,\ \alpha \notin \mathbb{N}$ and $u\in S(\mathbb N_0; X)$ be given. The fractional difference operator of $u$ of order $\alpha$ is defined by
\begin{equation}
\Delta^{\alpha}u(n):=\Delta^m\Delta^{-(m-\alpha)}u(n)
\end{equation}
when $n\in \mathbb N_0$,
 where $m\in \mathbb{N}$ is the unique integer $m$ satisfying $m-1<\alpha < m$. For more study of fractional difference operators, we refer the readers to \cite{go, li1, li2}
\end{definition}

Let $u\in S(\mathbb Z; X)$ be given,  the discrete time Fourier transform of $u$ is given by
\begin{equation*}
\hat{u}(z) := \sum_{j=-\infty}^{\infty}z^{-j}u(j)
\end{equation*}
for all $\vert z\vert =1$, whenever it exists.  We notice that the Fourier transform of $ u$ is sometimes also denoted by $\mathcal{F} (u)$. It is clear that when $f\in S(\mathbb N_0; \mathbb{C})$ and $g\in S(\mathbb N_0;X)$, then
\begin{equation}
\widehat{(f*g)}(z) = \hat f(z)\hat g(z)
\end{equation}
 when both sides are well defined for all $\vert z\vert =1$.

Let $k^\alpha$ be defined by (2.4) for $0 < \alpha \leq 1$. It follows  that the Fourier transform of $k^\alpha$ is given by
\begin{align}\
\hat k^\alpha (z) = \frac{z^\alpha}{(z-1)^\alpha}
\end{align}
when $\vert z\vert =1$ and $z\not= 1$.

 We say that a Banach space $X$  is a UMD (Unconditionality of Martingale Differences) space if for all $1 < p<\infty$, there exists a constant $C >0$ (depending only on $p$ and $X$) such that for any martingale $(g_n)_{n\geq0}\subset L^p(\Omega,\Sigma,\mu; X)$ and all scalars $|\varepsilon_n|=1, n=1, 2\cdots, N$, the following inequality holds:
 \begin{equation*}
 \Big\|g_0+\sum_{n=1}^{N}\varepsilon_n(g_n-g_{n-1})\Big\|_{L^p(\Omega,\Sigma,\mu; X)}\leq C
 \big\|g_N\big\|_{L^p(\Omega,\Sigma,\mu; X)}.
 \end{equation*}
 It is well known that $L_p$ spaces, Schatten class $S_p$ and Sobolev spaces $W^{m,p}$ are UMD spaces when $1 < p < \infty$.
UMD spaces have played a very important part in  vector-valued harmonic analysis and probability theory, see \cite{bu}.

  Let $X$ be a Banach space, we denote by $B(X)$ the space of all bounded linear operators on $X$.
 Let $r_j$ be the $j$-th Rademacher function defined on $[0,1]$ by $r_j(t)=\text{sgn(sin}(2^{j}t))$ whenever $j\geq1$.

\begin{definition} Let $X$  be a  Banach spaces.  A set $W\subset B(X)$  is said to be Rademacher bounded ($R$-bounded, shortly) \cite{hy}, if there exists $C\geq0$ such that
\begin{align*}
\Big\|\sum_{j=1}^{n}r_j T_jx_j\Big\|_{L^1([0,1];X)}\leq C\Big\|\sum_{j=1}^{n}r_j x_j\Big\|_{L^1([0,1];X)}
 \end{align*}
 for all $T_1, T_2,\cdots, T_n\in W$, $x_1, x_2,\cdots, x_n\in X$ and $n\in \mathbb N.$
\end{definition}

\begin{remark}
 \rm It is clear that when $\mathrm{W_1}, \mathrm{W_2}\subset B(X)$ are $R$-bounded sets, then the sets
 $$\mathrm{W_1 W_2 :=\big\{ST: S\in W_1, T\in W_2\big\}},\ \mathrm{W_1+W_2}:= \big\{S+T: S\in W_1, T\in W_2\big\}$$
  are still $R$-bounded. It is easy to see that if $\mathrm{W}$ is a bounded subset of the complex plane $\mathbb C$, then the set $\{\mu I: \mu\in \mathrm{W}\}$ is also $R$-bounded, where $I$ stands for the identity operator on $X$. This follows easily from the Kahane's contraction principle \cite{li0}.
\end{remark}

In what follows we denote by $\mathbb{T}=(-\pi, 0)\cup(0, \pi)$. Let $X$ be a Banach space and let  $G:\mathbb{T}\rightarrow B(X)$ be bounded and measurable. Let $f\in S(\mathbb{Z}; X)$ with finite support, i.e., the set $\{n\in \mathbb{Z}: f(n) \not=0\}$ is finite. Then the function $t\to G(t)(\mathcal{F} f)(e^{it})$ defined on $t\in \mathbb{T}$ is bounded and measurable. Thus its inverse Fourier transform
\begin{equation*}
\mathcal{F}^{-1}\big[G(\cdot) (\mathcal{F} f)(e^{i\cdot})\big] (n) = \frac{1}{2\pi} \int_0^{2\pi} G(t)(\mathcal{F} f)(e^{it}) e^{int} dt
\end{equation*}
makes sense for all $n\in\mathbb{Z}$. Let $1\leq p < \infty$ be given. We say that $G$ is an $\ell^p$-Fourier multiplier, if there exists a constant $C> 0$ independent  from
$f$, such that
\begin{equation*}
\Big(\sum_{n\in\mathbb{Z}} \big\Vert \mathcal{F}^{-1}\big[G(\cdot) (\mathcal{F} f)(e^{i\cdot})\big] (n) \big\Vert^p\Big)^{1/p} \leq C \Big(\sum_{n\in\mathbb{Z}} \Vert f(n)\Vert^p\Big)^{1/p}
\end{equation*}
for all $f\in S(\mathbb{Z}; X)$ with finite support. In this case, there exists a unique
bounded linear operator
$T_G \in B(\ell^p(\mathbb{Z}; X))$, such that
\begin{equation}
G(t) (\mathcal{F} f)(e^{it}) = \mathcal{F} (T_G f)(e^{it})
 \end{equation}
 when $t\in \mathbb{T}$ and  $f\in S(\mathbb{Z}; X)$ with finite support. Here we used the easy fact that the set of all elements $f\in S(\mathbb{Z}; X)$ with finite support is dense in $\ell^p(\mathbb{Z}; X)$.

We finish this section with the following $\ell^p$-Fourier multipliers established by Blunck \cite{bl}, which will be fundamental in our investigation.

\begin{theorem} Let $1 < p < \infty$ and let $X$ be a UMD space. Assume that  $G:\mathbb{T}\rightarrow B(X)$ is differentiable and the sets
\begin{equation*}
\big\{G(t): t\in \mathbb{T}\big\}~~and \ \ \big\{(e^{it}-1)(e^{it}+1)G'(t): t\in \mathbb{T}\big\}
\end{equation*}
 are $R$-bounded. Then $G$ is an $\ell^p$-Fourier multiplier.
\end{theorem}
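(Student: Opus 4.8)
The plan is to deduce this discrete, operator-valued multiplier theorem from its continuous-time counterpart, namely Weis's operator-valued Mikhlin multiplier theorem on $L^p(\mathbb{R};X)$ (available precisely because $X$ is a UMD space), by a transference argument. The algebraic observation that drives everything is $(e^{it}-1)(e^{it}+1)=e^{2it}-1=2ie^{it}\sin t$, so the second hypothesis says exactly that $\{\sin(t)\,G'(t):t\in\mathbb{T}\}$ is $R$-bounded. Since $\sin t$ vanishes to first order at each of the two singular points $t=0$ and $t=\pm\pi$ of the torus, this is the faithful discrete analogue of the classical Mikhlin condition that $\{sM'(s)\}$ be $R$-bounded: near a singular point $t_0$ one has $\sin t\sim\pm(t-t_0)$, so the hypothesis controls $(t-t_0)G'(t)$ there, which is what Weis's theorem needs at a single frequency singularity.

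First I would localize. Fix smooth scalar cut-offs $\chi_0,\chi_\pi$ on the circle with $\chi_0+\chi_\pi\equiv 1$, where $\chi_0$ is supported in a small neighbourhood of $0$ and $\chi_\pi$ near $\pm\pi$, and write $G=\chi_0 G+\chi_\pi G$. By the Leibniz rule together with Remark 2.3 (sums and products of $R$-bounded families are $R$-bounded, and a bounded family of scalars times the identity is $R$-bounded), each summand again satisfies both hypotheses: in $(e^{it}-1)(e^{it}+1)(\chi_i G)'(t)$ the term carrying $\chi_i'$ is supported away from the singularities, where $(e^{it}-1)(e^{it}+1)$ is bounded and $G$ is $R$-bounded, while the term carrying $\chi_i$ is a scalar-bounded multiple of the given $R$-bounded family. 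Next, the translation $t\mapsto t+\pi$ of the torus is implemented on $\ell^p(\mathbb{Z};X)$ by the modulation $u(n)\mapsto(-1)^n u(n)$, which is an isometry; applying it to the piece $\chi_\pi G$ moves its singularity from $\pm\pi$ to $0$. Hence it suffices to treat a symbol $H$ supported in a small neighbourhood of $t=0$ and satisfying both $R$-boundedness hypotheses.

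With the singularity at the origin I would pass to the line. Define $M:\mathbb{R}\to B(X)$ by $M(s)=H(s)$ for $|s|\le\pi$ and $M(s)=0$ for $|s|>\pi$; since $H$ is supported near $0$, $M$ is $C^1$ on $\mathbb{R}\setminus\{0\}$ with compact support, so nothing needs to be checked at infinity. The set $\{M(s)\}$ is $R$-bounded, and for $\{sM'(s)\}$ one checks $R$-boundedness separately near $0$, where the scalar ratio $s/[(e^{is}-1)(e^{is}+1)]$ is bounded so that $sH'(s)$ is a bounded-scalar multiple of the $R$-bounded family $\{(e^{is}-1)(e^{is}+1)H'(s)\}$, and away from $0$, where $s$ is bounded while $H$ and $H'$ are $R$-bounded. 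Weis's theorem then yields that $M$ is an $L^p(\mathbb{R};X)$-Fourier multiplier. Finally a de Leeuw--type transference principle for operator-valued multipliers, in the $R$-bounded UMD setting, transfers $M$ to a multiplier on the torus coinciding with $H$; summing the two localized pieces and undoing the modulation recovers $G$ as an $\ell^p(\mathbb{Z};X)$-Fourier multiplier.

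The hard part is the transference step: one must make precise, for operator-valued symbols over a UMD space, the passage from $L^p(\mathbb{R};X)$-multipliers to $\ell^p(\mathbb{Z};X)$-multipliers obtained by restricting the symbol to the fundamental domain $[-\pi,\pi]$. The scalar de Leeuw restriction theorem does not carry over verbatim; one needs a vector-valued version that controls the multiplier norm and is compatible with $R$-boundedness, and the regularity I arranged for $M$ (compact support, continuity, piecewise $C^1$) is exactly what makes such a principle applicable. An alternative that avoids transference would be a direct discrete Littlewood--Paley / Marcinkiewicz decomposition on $\mathbb{Z}$, estimating dyadic blocks of $G$ around each of the two singular points via the $R$-boundedness hypotheses and the UMD-valued square-function equivalence; I would expect the same weight $\sin t$ to govern the dyadic derivative bounds, but the simultaneous bookkeeping for the two singularities makes the transference route cleaner.
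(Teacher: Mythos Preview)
The paper does not prove this statement at all: Theorem~2.4 is quoted in the preliminaries as Blunck's multiplier theorem and is simply cited from \cite{bl} without argument. So there is no ``paper's own proof'' to compare your proposal against.

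That said, your outline is a reasonable sketch and is in fact close in spirit to how Blunck himself proceeds. The identification $(e^{it}-1)(e^{it}+1)=2ie^{it}\sin t$ and the consequent reading of the second hypothesis as a Mikhlin condition at the two singular frequencies $0$ and $\pi$ is exactly the right algebraic observation; the localization by a smooth partition of unity and the reduction of the piece near $\pi$ to a piece near $0$ via the modulation $u(n)\mapsto(-1)^n u(n)$ are standard and correct. Where your sketch is honest but thin is the final step. What you call ``de~Leeuw--type transference'' is not the classical de~Leeuw restriction (which passes from $L^p(\mathbb{R})$-multipliers to $L^p(\mathbb{T})$-multipliers by restricting the symbol to $\mathbb{Z}$); you need the dual passage, from an $L^p(\mathbb{R};X)$-multiplier with compactly supported symbol $M$ on $(-\pi,\pi)$ to an $\ell^p(\mathbb{Z};X)$-multiplier with symbol $M$ read as a function on $\mathbb{T}$. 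This periodization direction is true in the operator-valued UMD setting, but it is a separate theorem (closer to Jodeit-type extension/periodization results, or obtained by embedding $\ell^p(\mathbb{Z};X)$ into $L^p(\mathbb{R};X)$ via step functions) and deserves an explicit reference or a short self-contained argument. Your alternative route via a direct discrete Littlewood--Paley decomposition is also viable and is another way Blunck's result is proved in the literature; either way the content lies precisely in the step you flagged as ``the hard part''.
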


\begin{theorem}
Let $1 \leq p < \infty$ and let $X$ be a Banach space.  Let $G: \mathbb{T}\rightarrow B(X)$ be continuous and bounded. Assume  that $G$ is an $\ell^p$-Fourier multiplier. Then the set $\{G(t): t\in \mathbb{T}\}$ is $R$-bounded.
\end{theorem}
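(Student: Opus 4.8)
The plan is to obtain the Rademacher inequality defining $R$-boundedness of $\{G(t):t\in\mathbb T\}$ by a frequency-localization and randomization argument: I probe the multiplier operator $T_G$ with modulated wave packets whose frequencies cluster near finitely many prescribed points, observe that on each packet $T_G$ acts essentially as scalar multiplication by the corresponding value of $G$, and then convert the $\ell^p(\mathbb Z;X)$-boundedness of $T_G$ into the desired estimate.

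First I reduce to a finite inequality. By Kahane's inequality the $L^1$-norms in Definition 2.2 may be replaced by $L^p$-norms, so it suffices to find $C$, depending only on $p$, $X$ and $\|T_G\|_{B(\ell^p(\mathbb Z;X))}$, with
\begin{equation*}
\Big\|\sum_{j=1}^{m}r_j\,G(t_j)x_j\Big\|_{L^p([0,1];X)}\le C\,\Big\|\sum_{j=1}^{m}r_j\,x_j\Big\|_{L^p([0,1];X)}
\end{equation*}
for all $m\in\N$, all $t_1,\dots,t_m\in\mathbb T$ and all $x_1,\dots,x_m\in X$. As $\mathbb T$ is open and $G$ is continuous, I fix $\eps>0$ and choose pairwise disjoint arcs $I_j\subset\mathbb T$ about $t_j$ with $\|G(t)-G(t_j)\|\le\eps$ on $I_j$. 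For the probe I take a fixed Schwartz profile $\eta$ on $\R$ and, for a small scale $\delta>0$, form the $X$-valued sequence
\begin{equation*}
f_\omega(k)=\sum_{j=1}^{m}r_j(\omega)\,\eta(\delta k)\,e^{it_jk}\,x_j,\qquad k\in\mathbb Z.
\end{equation*}

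The key point is that the discrete Fourier transform of $k\mapsto\eta(\delta k)e^{it_jk}$ concentrates near the frequency $t_j$ as $\delta\to0$, so applying $T_G$ multiplies it, up to a remainder living where $G$ varies by at most $\eps$, by the constant $G(t_j)$; thus $T_Gf_\omega=\sum_j r_j(\omega)\,\eta(\delta\,\cdot)e^{it_j\cdot}\,G(t_j)x_j+R_{\delta,\eps}$. Since the common factor $\eta(\delta k)$ multiplies every term and modulation by $e^{it_jk}$ is unimodular, integrating in $\omega$ and applying Kahane's contraction principle for each fixed $k$ removes the phases $e^{it_jk}$ at the cost of a universal constant; summing over $k$ then extracts the scalar factor $\big(\sum_k|\eta(\delta k)|^p\big)^{1/p}$, so that $\int_0^1\|\sum_j r_j(\omega)\eta(\delta\cdot)e^{it_j\cdot}G(t_j)x_j\|_{\ell^p}^p\,d\omega$ is comparable to $\big(\sum_k|\eta(\delta k)|^p\big)\cdot\|\sum_j r_j G(t_j)x_j\|_{L^p}^p$, and likewise $\int_0^1\|f_\omega\|_{\ell^p}^p\,d\omega$ is comparable to the same scalar factor times $\|\sum_j r_j x_j\|_{L^p}^p$. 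Feeding these into $\|T_Gf_\omega\|_{\ell^p}\le\|T_G\|\,\|f_\omega\|_{\ell^p}$, integrating in $\omega$, cancelling the common scalar factor, and finally letting $\delta\to0$ and then $\eps\to0$ yields the required inequality with $C$ a fixed multiple of $\|T_G\|$.

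The step I expect to be the main obstacle is the $\ell^p$-decoupling of the packets together with the control of the remainder $R_{\delta,\eps}$. For $p=2$, or when $X$ is a Hilbert space, disjointness of the frequency supports gives exact orthogonality and the estimate is immediate; for general $1\le p<\infty$ and a general Banach space no orthogonality is available, and it is precisely the common-modulus structure of the modulated bumps, combined with the contraction principle, that must take its place. Making the remainder bound rigorous --- quantifying the concentration of $\widehat{\eta(\delta\,\cdot)}$ against the modulus of continuity of $G$ and pushing both limits $\delta\to0$ and $\eps\to0$ through the $\ell^p$-norms uniformly in $m$ and in the data $x_1,\dots,x_m$ --- is the technical heart of the argument.
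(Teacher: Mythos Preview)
The paper does not prove this statement. Theorem~2.5 is stated in Section~2 as a preliminary result of Blunck \cite{bl} and is invoked without proof in the argument for Theorem~4.2, so there is no proof in the paper to compare your attempt against.

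Your framework is the standard one for this necessity result, and the input--output computations via the contraction principle are correct. The gap is exactly where you place it, but the resolution you sketch does not work for $p\neq 2$ on a general Banach space: a symbol that is uniformly small on the frequency support of the packet need not produce an output with small $\ell^p$-norm. Concretely, the only general bound available is
\[
\|R_j\|_{\ell^p}=\big\|T_{G-G(t_j)}\big(\eta(\delta\cdot)e^{it_j\cdot}x_j\big)\big\|_{\ell^p}\le\big(\|T_G\|+\|G(t_j)\|\big)\,\|\eta(\delta\cdot)\|_{\ell^p}\,\|x_j\|,
\]
which is of the \emph{same} order as the main term; without Plancherel or a bounded Fourier projection onto $I_j$ (neither is available for general $X$ and $p\neq 2$), ``concentration of $\widehat{\eta(\delta\cdot)}$ against the modulus of continuity of $G$'' cannot by itself force $R_{\delta,\eps}\to 0$ in $\ell^p$.

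The standard repair --- essentially Blunck's argument --- avoids the remainder entirely by extracting the Fej\'er means of $G$ rather than $G(t_j)$. With the packet $f_\omega(k)=\mathbf 1_{[0,N)}(k)\sum_jr_j(\omega)e^{it_jk}x_j$ one has the exact identity
\[
\sigma_{N-1}G(t_j)\,x_j=\frac1N\sum_{n=0}^{N-1}e^{-it_jn}\big(T_G(\mathbf 1_{[0,N)}e^{it_j\cdot}x_j)\big)(n),
\]
and then the triangle inequality over $n$, contraction to remove the phases $e^{-it_jn}$, Jensen's inequality, the $\ell^p$-bound for $T_G$, and one more contraction on $f_\omega$ give an $R$-bound $\le C\|T_G\|$ for $\{\sigma_{N-1}G(t):N\in\N,\ t\in\mathbb T\}$ with no error term at all. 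Continuity of $G$ on $\mathbb T$ yields $\sigma_{N-1}G(t)\to G(t)$ in operator norm, and $R$-bounds pass to norm limits. Your packets and contraction step are precisely the right ingredients; what is missing is this averaging device in place of the frequency-localization remainder estimate.
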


\section{The Representation of Solutions}
In this section, we will introduce  a concept of  $\alpha$-resolvent sequence of bounded operators,  which will give  an explicit representation of the solution for the fractional difference equation (1.1) when $2 < \alpha<3$. We observe that the equation (1.1) in the case $0< \alpha \leq1$ with the initial conditions $u(i)=0$ when $ i=-\lambda, -\lambda+1,\cdots, -1, 0$ was previously studied by Lizama and Murillo-Arcila \cite{li5}. Later, Leal, Lizama and Murillo-Arcila \cite{le} further studied the equation (1.1) in the case $1< \alpha \leq2$ with the initial conditions  $u(i)=0$ when $i=-\lambda, -\lambda+1,\cdots, 0,  1$.

\subsection{{\rm \textbf{The $\alpha$-Resolvent Sequence with Finite Delay }}}

\vskip .3cm
We first introduce an similar $\alpha$-resolvent sequence $S_\alpha (n)$ when $n\geq -\lambda$  used in  \cite{li5, le}.

\begin{definition}
Let $A$ be a bounded linear operator defined on a  Banach space $X$, and let $\gamma \in \mathbb R$, $\lambda\in\mathbb N$ and $2 < \alpha < 3$. We define the $\alpha$-resolvent sequence $S_\alpha (n)$ when $n\geq -\lambda$ generated by $A, \alpha, \gamma$ and $\lambda$    by

(i) $S_\alpha(-i)= 0$ when $ i= 1, \cdots, \lambda$;;

(ii) $S_\alpha(0)= S_\alpha(1)= S_\alpha(2)= I$;

(iii) $S_\alpha(n+3) - 2S_\alpha (n+2) + S_\alpha (n+1)= A(k^{\alpha-2}*S_\alpha)(n) +\gamma ( k^{\alpha-2}*S_\alpha^\lambda)(n)+ k^{\alpha-2}(n+3)I + (1-\alpha)k^{\alpha-2}(n+2)I + \frac{(\alpha-1)(\alpha-2)}{2}k^{\alpha-2}(n+1)I$ when $ n\in\mathbb N_0$, where $S_\alpha^\lambda(n)$ is defined by $S_\alpha^\lambda(n)=S_\alpha(n-\lambda)$.
\end{definition}

\begin{remark}  Note that the above Definition 3.1 corresponds to the definition of $\alpha$-resolvent sequence defined in \cite{zh} when $\gamma=0$. We need to define $S_\alpha (-1) = \cdots = S_\alpha (-\lambda) = 0$ as $S_\alpha^\lambda(n)$ is defined by $ S_\alpha(n-\lambda)$ when $n\in \mathbb N_0$.
\end{remark}

\begin{remark}  Let  $\gamma\in\mathbb R$, $\lambda\in\mathbb N$ and $2<\alpha<3$. Assume that $z^{3-\alpha}(z-1)^\alpha-\gamma z^{-\lambda}\in\rho (A)$ for all  $|z|=1,\ z\not=  1$. Then the Fourier transform of $S_\alpha$ is given by
 \begin{equation}
\widehat{S}_\alpha(z)=\Big[z^3+(1-\alpha)z^2+\frac{(\alpha-1)(\alpha-2)}{2}z\Big]\big[z^{3-\alpha}(z-1)^\alpha-\gamma z^{-\lambda}-A\big]^{-1}
\end{equation}
when $|z|=1,\ z\not= 1$. Indeed, taking the  Fourier transform  on both sides of (iii) in Definition 3.1 and using (2.4), (2.7) and (i), (ii) in Definition 3.1, we obtain
 \begin{equation*}
 \begin{aligned}
z^3\widehat{S}_\alpha(z)&-z^3-z^2-z-2\big[z^2\widehat{S}_\alpha(z)-z^2-z\big]+z\widehat{S}_\alpha(z)-z=
A\widehat{k}^{\alpha-2}(z)\widehat{S}_\alpha(z)\\
&+\gamma z^{-\lambda}\widehat{k}^{\alpha-2}(z)\hat{S}_\alpha^\lambda(z)+z^3\widehat{k}^{\alpha-2}(z)-z^3-z^2(\alpha-2)-\frac{(\alpha-1)(\alpha-2)}{2}z\\
&+(1-\alpha)\big[z^2\widehat{k}^{\alpha-2}(z)-z^2-z(\alpha-2)\big]+\frac{(\alpha-1)(\alpha-2)}{2}\big(z\widehat{k}^{\alpha-2}(z)-z\big)\\
\end{aligned}
\end{equation*}
when $|z|=1,\ z\not=  1$. Here we have used the fact that the function $\widehat{k}^{\alpha-2}(z)$ is only well-defined when  $|z|=1,\ z\not=  1$
by (2.8).
It follows that
 \begin{equation}\nonumber
\big[z^3-2z^2+z-\gamma z^{-\lambda}\widehat{k}^{\alpha-2}(z)-A\widehat{k}^{\alpha-2}(z)\big]\widehat{S}_\alpha(z)=\Big[z^3+(1-\alpha)z^2+\frac{(\alpha-1)
(\alpha-2)}{2}z\Big]\widehat{k}^{\alpha-2}(z)
\end{equation}
when $|z|=1,\ z\not=  1$. Thus, by  (2.8), we have
 \begin{equation}\nonumber
\widehat{S}_\alpha(z)=\Big[z^3+(1-\alpha)z^2+\frac{(\alpha-1)(\alpha-2)}{2}z\Big]\big[z^{3-\alpha}(z-1)^\alpha-\gamma z^{-\lambda}-A\big]^{-1}
\end{equation}
when $|z|=1,\ z\not=  1$.
\end{remark}

We will need the following Lemma proved in \cite{zh}.

\begin{lemma} {\rm (\cite[Lemma 3.1]{zh})}
Let $2<\alpha<3$, $b:\mathbb N_0\rightarrow \mathbb C$ and $P:\mathbb N_0\rightarrow X$, where $X$ is a Banach space. Then
\begin{align*}
\Delta^{\alpha}(b*P)(n)&=(b*\Delta^{\alpha}P)(n)+b(n+3)P(0)+b(n+2)\big[P(1)-\alpha P(0)\big]\nonumber\\
&+b(n+1)\Big[P(2)-\alpha P(1)+\frac{\alpha(\alpha -1)}{2}P(0)\Big]
\end{align*}
when $n\in \mathbb N_0$.
\end{lemma}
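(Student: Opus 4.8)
The plan is to reduce the fractional difference to an ordinary third-order difference of a convolution and then apply a discrete Leibniz rule. Since $2<\alpha<3$, the integer $m$ in Definition 2.1 equals $3$, so $\Delta^\alpha u=\Delta^3\Delta^{-(3-\alpha)}u=\Delta^3(k^{3-\alpha}*u)$, where $0<3-\alpha<1$ makes the fractional sum (2.3) legitimate. Applying this to $u=b*P$ and using the associativity and commutativity of the finite convolution recorded in (2.2), I would write
\[
\Delta^\alpha(b*P)=\Delta^3\big(k^{3-\alpha}*(b*P)\big)=\Delta^3\big(b*Q\big),\qquad Q:=k^{3-\alpha}*P .
\]
This isolates all the fractional information inside the single sequence $Q$ and already identifies the main term, since $\Delta^3 Q=\Delta^3(k^{3-\alpha}*P)=\Delta^\alpha P$, whence $b*\Delta^3 Q=b*\Delta^\alpha P$.

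The second step is a discrete Leibniz rule for $\Delta$ against a convolution. Writing out (2.1) and shifting the summation index, a direct computation gives
\[
\Delta(b*Q)(n)=(b*\Delta Q)(n)+Q(0)\,b(n+1),\qquad n\in\mathbb N_0 .
\]
Iterating this identity three times, differencing both the convolution term and the boundary term $b(\cdot+1)$ at each stage, yields
\[
\Delta^3(b*Q)(n)=(b*\Delta^3 Q)(n)+(\Delta^2 Q)(0)\,b(n+1)+(\Delta Q)(0)\,\Delta b(n+1)+Q(0)\,\Delta^2 b(n+1).
\]
Expanding $\Delta b(n+1)=b(n+2)-b(n+1)$ and $\Delta^2 b(n+1)=b(n+3)-2b(n+2)+b(n+1)$ and collecting the coefficients of $b(n+3),b(n+2),b(n+1)$ reduces the claim to computing the three boundary quantities $Q(0)$, $(\Delta Q)(0)=Q(1)-Q(0)$ and $(\Delta^2 Q)(0)=Q(2)-2Q(1)+Q(0)$.

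For the final step I would evaluate $Q(0),Q(1),Q(2)$ directly from $Q=k^{3-\alpha}*P$ using the values $k^{3-\alpha}(0)=1$, $k^{3-\alpha}(1)=3-\alpha$ and $k^{3-\alpha}(2)=\tfrac{(3-\alpha)(4-\alpha)}{2}$ coming from (2.4). Substituting these into the three boundary coefficients and simplifying gives the coefficient of $b(n+3)$ equal to $P(0)$, that of $b(n+2)$ equal to $P(1)-\alpha P(0)$, and that of $b(n+1)$ equal to $P(2)-\alpha P(1)+\tfrac{\alpha(\alpha-1)}{2}P(0)$, which is exactly the asserted identity. The only real difficulty is bookkeeping: one must keep the shifted arguments $b(n+1),b(n+2),b(n+3)$ straight through the iteration of the Leibniz rule and then verify the algebraic collapse of the $b(n+1)$-coefficient, where the contributions from $(\Delta^2 Q)(0),(\Delta Q)(0),Q(0)$ combine through $\tfrac{(\alpha-1)(\alpha-2)}{2}+(\alpha-1)=\tfrac{\alpha(\alpha-1)}{2}$ into the clean Taylor-type coefficient. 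No analytic subtlety arises, since $b,P$ are arbitrary and all sums are finite.
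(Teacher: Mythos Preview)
Your argument is correct. The reduction $\Delta^\alpha(b*P)=\Delta^3(b*Q)$ with $Q=k^{3-\alpha}*P$ via associativity (2.2), the Leibniz-type identity $\Delta(b*Q)(n)=(b*\Delta Q)(n)+Q(0)b(n+1)$, its threefold iteration, and the evaluation of $Q(0),Q(1),Q(2)$ from (2.4) all check out, and the final algebraic collapse you describe is accurate.

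Note, however, that the paper does not supply its own proof of this lemma: it is quoted verbatim from \cite[Lemma~3.1]{zh} and used as an imported tool. So there is no in-paper argument to compare against; your proposal in fact fills in what the present paper leaves to the reference. If one consults \cite{zh}, the proof there proceeds by the same mechanism you use---expanding $\Delta^3$ on the convolution $k^{3-\alpha}*b*P$ and isolating the three boundary terms carrying $P(0),P(1),P(2)$---so your approach is essentially the standard one, organized cleanly through the iterated Leibniz rule rather than by writing out all the sums at once.
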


In order to prove our main result of this section, we need the following lemmas.

\begin{lemma}
Let $X$ be a Banach space, $A\in B(X)$, $2<\alpha<3$, $\gamma\in\mathbb R$, $\lambda\in\mathbb N$ and let $(S_\alpha (n))_{n\geq -\lambda}$ be the $\alpha$-resolvent sequence given by Definition 3.1. Then
\begin{equation*}
\Delta^{\alpha}S_\alpha(n)=AS_\alpha (n)+\gamma S_\alpha^\lambda(n)
\end{equation*}
when $0\leq n\leq 2$.
\end{lemma}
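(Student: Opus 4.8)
The plan is to avoid computing the awkward values $S_\alpha(3), S_\alpha(4), S_\alpha(5)$ (which a brute-force expansion of $\Delta^\alpha S_\alpha(n)$ for $n=0,1,2$ would force us to extract from Definition 3.1(iii)) and instead reduce the whole identity to the defining recursion by convolving with $k^{\alpha-2}$. Since $2<\alpha<3$ we have $m=3$ in Definition 2.1, so $\Delta^\alpha S_\alpha(n)=\Delta^3\Delta^{-(3-\alpha)}S_\alpha(n)=\Delta^3(k^{3-\alpha}*S_\alpha)(n)$ by (2.3). The idea is that convolution by $k^{\alpha-2}$ is injective on $S(\mathbb N_0;X)$ (its matrix is lower triangular with diagonal entries $k^{\alpha-2}(0)=1$), so it suffices to prove the equality after applying $k^{\alpha-2}*$.

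The key computation uses Lemma 3.3 with $b=k^{\alpha-2}$ and $P=S_\alpha$. Because $S_\alpha(0)=S_\alpha(1)=S_\alpha(2)=I$, the three boundary terms collapse: the coefficient $P(2)-\alpha P(1)+\tfrac{\alpha(\alpha-1)}{2}P(0)$ reduces, via $1-\alpha+\tfrac{\alpha(\alpha-1)}{2}=\tfrac{(\alpha-1)(\alpha-2)}{2}$, so that the boundary contribution is exactly the scalar inhomogeneity $k^{\alpha-2}(n+3)I+(1-\alpha)k^{\alpha-2}(n+2)I+\tfrac{(\alpha-1)(\alpha-2)}{2}k^{\alpha-2}(n+1)I$ appearing in Definition 3.1(iii). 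At the same time I would simplify the left-hand side of Lemma 3.3: $\Delta^\alpha(k^{\alpha-2}*S_\alpha)(n)=\Delta^3\big((k^{3-\alpha}*k^{\alpha-2})*S_\alpha\big)(n)=\Delta^3(k^1*S_\alpha)(n)$ by the semigroup property (2.5), and since $k^1\equiv 1$ one has $(k^1*S_\alpha)(n)=\sum_{j=0}^n S_\alpha(j)$, whence $\Delta^3(k^1*S_\alpha)(n)=\Delta^2 S_\alpha(n+1)=S_\alpha(n+3)-2S_\alpha(n+2)+S_\alpha(n+1)$.

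Putting these together, Lemma 3.3 yields $S_\alpha(n+3)-2S_\alpha(n+2)+S_\alpha(n+1)=(k^{\alpha-2}*\Delta^\alpha S_\alpha)(n)+[\,\text{scalar term}\,]I$. Comparing with Definition 3.1(iii), whose right-hand side is $(k^{\alpha-2}*[AS_\alpha+\gamma S_\alpha^\lambda])(n)+[\,\text{scalar term}\,]I$ (here I pull $A$ through the finite convolution using its linearity and boundedness, and use $S_\alpha^\lambda(j)=S_\alpha(j-\lambda)$), the scalar terms cancel and I am left with $k^{\alpha-2}*\Delta^\alpha S_\alpha=k^{\alpha-2}*\big(AS_\alpha+\gamma S_\alpha^\lambda\big)$ on $\mathbb N_0$. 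Injectivity of $k^{\alpha-2}*$ then gives $\Delta^\alpha S_\alpha(n)=AS_\alpha(n)+\gamma S_\alpha^\lambda(n)$ for every $n\in\mathbb N_0$, in particular for $0\le n\le 2$.

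The main obstacle is the bookkeeping that makes the two reductions align, namely verifying that the boundary terms produced by Lemma 3.3 coincide \emph{exactly} with the inhomogeneous scalar sequence built into Definition 3.1(iii); this hinges on the algebraic identity for the coefficient and on the semigroup collapse $k^{3-\alpha}*k^{\alpha-2}=k^1$. I should remark that this argument in fact proves the identity for all $n\ge 0$, which a fortiori covers the claimed range $0\le n\le 2$. A purely computational alternative, expanding $\Delta^3(k^{3-\alpha}*S_\alpha)(n)$ for $n=0,1,2$ after evaluating $S_\alpha(3),S_\alpha(4),S_\alpha(5)$ from the recursion, is also available but is more delicate, since the delay terms $S_\alpha^\lambda(n)=S_\alpha(n-\lambda)$ take different values according to whether $\lambda=1$, $\lambda=2$, or $\lambda\ge 3$, and would have to be tracked case by case.
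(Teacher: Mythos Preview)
Your argument is correct, and it is genuinely different from the paper's proof. The paper proves Lemma~3.5 by brute force: it expands $\Delta^\alpha S_\alpha(n)=\Delta^3(k^{3-\alpha}*S_\alpha)(n)$ for $n=0,1,2$, extracts $S_\alpha(3),S_\alpha(4),S_\alpha(5)$ from the recursion in Definition~3.1(iii), and simplifies each case by hand. Your approach instead applies the convolution lemma (what the paper labels Lemma~3.4, not~3.3) with $b=k^{\alpha-2}$, $P=S_\alpha$, notes that the semigroup identity collapses the left side to $S_\alpha(n+3)-2S_\alpha(n+2)+S_\alpha(n+1)$, observes that the boundary terms match the scalar inhomogeneity in Definition~3.1(iii) exactly, and then invokes the injectivity of $k^{\alpha-2}*$ (lower triangular with unit diagonal) to conclude $\Delta^\alpha S_\alpha=AS_\alpha+\gamma S_\alpha^\lambda$ on all of $\mathbb N_0$.

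What each approach buys: the paper's computation is self-contained but laborious, and it only yields $n=0,1,2$; a separate Lemma~3.6 then extends to all $n$ by deriving a second-order difference identity and feeding in Lemma~3.5 as initial data. Your route proves Lemmas~3.5 and~3.6 simultaneously and with essentially no arithmetic, the only nontrivial checks being $1-\alpha+\tfrac{\alpha(\alpha-1)}{2}=\tfrac{(\alpha-1)(\alpha-2)}{2}$ and $k^{3-\alpha}*k^{\alpha-2}=k^1\equiv 1$. In fact the paper's proof of Lemma~3.6 computes the same identity $\Delta^\alpha(k^{\alpha-2}*S_\alpha)(n)=S_\alpha(n+3)-2S_\alpha(n+2)+S_\alpha(n+1)$ (its equation~(3.10)) but does not make the direct comparison with Definition~3.1(iii) that you make; your observation that the boundary terms coincide with the built-in scalar inhomogeneity is precisely the shortcut that eliminates the need for the paper's two-step argument. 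One minor remark: the lemma you cite as ``Lemma~3.3'' is numbered~3.4 in the paper.
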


\begin{proof}
By using (2.1), (2.3) and (2.6), we have
\begin{align}
\Delta^{\alpha}S_\alpha(0)&=\Delta^3\Delta^{-(3-\alpha)}S_\alpha(0)\\
&=\Delta^{-(3-\alpha)}S_\alpha(3)-3\Delta^{-(3-\alpha)}S_\alpha(2)\nonumber\\
&\ \ \ \ +3\Delta^{-(3-\alpha)}S_\alpha(1)-\Delta^{-(3-\alpha)}S_\alpha(0)\nonumber\\
&=\sum_{j=0}^{3}k^{3-\alpha}(3-j)S_\alpha(j)-3\sum_{j=0}^{2}k^{3-\alpha}(2-j)S_\alpha(j)\nonumber\\
&\ \ \ \ +3\sum_{j=0}^{1}k^{3-\alpha}(1-j)S_\alpha(j)-k^{3-\alpha}(0)S_\alpha(0)\nonumber\\
&=S_\alpha(3)+k^{3-\alpha}(1)I-2k^{3-\alpha}(2)I+k^{3-\alpha}(3)I-I.\nonumber
\end{align}
It follows from Definition 3.1  that
\begin{align}
S_\alpha(3)=&I+A( k^{\alpha-2}*S_\alpha)(0)+\gamma (k^{\alpha-2} *S_\alpha^\lambda )(0)\\ &+k^{\alpha-2}(3)I+(1-\alpha)k^{\alpha-2}(2)I+\frac{(\alpha-1)(\alpha-2)}{2}k^{\alpha-2}(1)I.\nonumber
\end{align}

Thus, (3.2)and (3.3) together imply that
\begin{align*}\nonumber
\Delta^{\alpha}S_\alpha(0)&=I+A(k^{\alpha-2}*S_\alpha)(0)+\gamma (k^{\alpha-2}*S_\alpha^\lambda)(0)+k^{\alpha-2}(3)I+(1-\alpha)k^{\alpha-2}(2)I\nonumber\\
&\ \ \ \ +\frac{(\alpha-1)(\alpha-2)}{2}k^{\alpha-2}(1)I+k^{(3-\alpha)}(1)I-2k^{(3-\alpha)}(2)I+k^{(3-\alpha)}(3)I-I\nonumber\\
&=A(k^{\alpha-2}*S_\alpha)(0)+\gamma ( k^{\alpha-2}* S_\alpha^\lambda)(0)\nonumber\\
&=A(k^{\alpha-2}*S_\alpha)(0)+\gamma S_\alpha^\lambda(0)\nonumber\\
&=AS_\alpha(0)+\gamma S_\alpha^\lambda(0)\nonumber.
\end{align*}

 From Definition 3.1 again, we have
\begin{align}
S_\alpha (4) = &2S_\alpha (3) - I + A(k^{\alpha -2}*S_\alpha)(1)+\gamma (k^{\alpha-2}*S_\alpha^\lambda)(1)\\
 & + k^{\alpha -2} (4)I+ (1-\alpha)k^{\alpha -2} (3)I + \frac{(\alpha-1)(\alpha-2)}{2}k^{\alpha -2} (2)I\nonumber
\end{align}
and
\begin{align}
S_\alpha (5) = &2S_\alpha (4) - S_\alpha (3) + A (k^{\alpha -2}*S_\alpha)(2) +\gamma (k^{\alpha-2}* S_\alpha^\lambda)(2)\\
 &+ k^{\alpha-2}(5)I+ (1-\alpha)k^{\alpha-2}(4)I + \frac{(\alpha-1)(\alpha-2)}{2}k^{\alpha-2}(3)I.\nonumber
\end{align}

Again by  (2.1), (2.3) and (2.6), we obtain
\begin{align}
\Delta^{\alpha}S_\alpha(1)&=\Delta^3\Delta^{-(3-\alpha)}S_\alpha(1)\\
&=\Delta^{-(3-\alpha)}S_\alpha(4)-3\Delta^{-(3-\alpha)}S_\alpha(3)\nonumber\\
&\ \ \ \ +3\Delta^{-(3-\alpha)}S_\alpha(2)-\Delta^{-(3-\alpha)}S_\alpha(1)\nonumber\\
&=\sum_{j=0}^{4}k^{3-\alpha}(4-j)S_\alpha(j)-3\sum_{j=0}^{3}k^{3-\alpha}(3-j)S_\alpha(j)\nonumber\\
&\ \ \ \ +3\sum_{j=0}^{2}k^{3-\alpha}(2-j)S_\alpha(j)-\sum_{j=0}^{1}k^{3-\alpha}(1-j)S_\alpha(j)\nonumber\\
&=S_\alpha(4)-\alpha S_\alpha (3)+ k^{3-\alpha}(2)I-2k^{3-\alpha}(3)I\nonumber\\
&\ \ \ \ +k^{3-\alpha}(4)I+(\alpha -1)I.\nonumber
\end{align}
Therefore, by using (3.3), (3.4) and (3.6) we deduce that
\begin{align*}
\Delta ^\alpha S_\alpha (1) &= (2-\alpha)S_\alpha (3)- I + A(k^{\alpha -2}*S_\alpha)(1) +\gamma ( k^{\alpha-2}*S_\alpha^\lambda)(1)I\\
&\ \ \ \ + k^{\alpha -2} (4) + (1-\alpha)k^{\alpha -2}(3)I + \frac{(\alpha-1)(\alpha-2)}{2}k^{\alpha-2}(2)I\\
&\ \ \ \ + k^{3-\alpha}(2)I-2k^{3-\alpha}(3)I+k^{3-\alpha}(4)I+(\alpha -1)I\\
&=AS_\alpha (1) + k^{\alpha-2}(4)I + (3-2\alpha) k^{\alpha-2}(3)I +\frac{3(\alpha -1)(\alpha -2)}{2} k^{\alpha-2}(2)I\\
&\ \ \ \  - \frac{(\alpha -1)(\alpha -2)^2}{2}k^{\alpha-2}(1)I
 + k^{3-\alpha}(2)I-2k^{3-\alpha}(3)I\\
 & \ \  \ \ +k^{3-\alpha}(4)I
+(2-\alpha)\gamma (k^{\alpha-2}* S_\alpha^\lambda)(0)+\gamma ( k^{\alpha-2}*S_\alpha^\lambda )(1)\\
&=AS_\alpha (1)+\gamma S_\alpha^\lambda(1).
\end{align*}

Using a similar argument in (3.6), we have
\begin{align}
\Delta^{\alpha}S_\alpha(2)=&S_\alpha(5)-\alpha S_\alpha (4)+ \frac{\alpha (\alpha -1)}{2}S_\alpha (3) + k^{3-\alpha}(5)I -2k^{3-\alpha}(4)I\\
& + k^{3-\alpha}(3)I-k^{3-\alpha}(2)I + 2k^{3-\alpha}(1)I - k^{\alpha-2}(0)I.\nonumber
\end{align}
Therefore, applying (3.3), (3.4), (3.5) and (3.7), we deduce that
\begin{align*}
\Delta ^\alpha S_\alpha (2) &= AS_\alpha(2) + \frac{\alpha (\alpha -3)}{2}I + k^{\alpha-2}(5)I + (3-2\alpha) k^{\alpha-2}(4)I\\
&\ \ \ \ +(\alpha -2)(2\alpha -3) k^{\alpha-2}(3)I +\frac{(2-\alpha)(1-\alpha)(5-2\alpha)}{2} k^{\alpha-2}(2)I \\
&\ \ \ \ -\frac{(\alpha -1)(3-\alpha)(\alpha -2)^2}{4}k^{\alpha-2}(1)I+ k^{3-\alpha}(5)I -2k^{3-\alpha}(4)I\\
&\ \ \ \  + k^{3-\alpha}(3)I -k^{3-\alpha}(2)I + 2k^{3-\alpha}(1)I+ \frac{(\alpha-2)(\alpha-3)}{2} \gamma S_\alpha^\lambda(0) \\
&\ \ \ \ +(2-\alpha)\gamma (k^{\alpha-2}* S_\alpha^\lambda)(1)+\gamma (k^{\alpha-2}*S_\alpha^\lambda )(2)\\
& =AS_\alpha(2)+\gamma S_\alpha^\lambda(2).
\end{align*}
The proof is completed.
\end{proof}

\begin{lemma}
Let $X$ be a Banach space, $A\in B(X)$, $2<\alpha<3$, $\gamma\in\mathbb R$, $\lambda\in\mathbb N$ and let $(S_\alpha (n))_{n\geq -\lambda}$ be the $\alpha$-resolvent sequence given by Definition 3.1. Then
\begin{equation*}
\Delta^{\alpha}S_\alpha(n)= AS_\alpha (n)+\gamma S_\alpha^\lambda(n)
\end{equation*}
for all $n\in \mathbb{N}_0$.
\end{lemma}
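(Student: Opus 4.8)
The plan is to prove the identity for all $n\in\mathbb N_0$ at once, rather than extending the term-by-term computation of Lemma 3.4. The idea is to evaluate $\Delta^\alpha(k^{\alpha-2}*S_\alpha)$ in two different ways and then cancel the convolution factor $k^{\alpha-2}$. Throughout, write $C(n):=k^{\alpha-2}(n+3)I+(1-\alpha)k^{\alpha-2}(n+2)I+\frac{(\alpha-1)(\alpha-2)}{2}k^{\alpha-2}(n+1)I$ for the inhomogeneous part of Definition 3.1(iii), and set $T:=\Delta^\alpha S_\alpha-AS_\alpha-\gamma S_\alpha^\lambda$, the sequence I must show vanishes.

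First I would apply Lemma 3.3 with $b=k^{\alpha-2}$ and $P=S_\alpha$. Using $S_\alpha(0)=S_\alpha(1)=S_\alpha(2)=I$ from Definition 3.1(ii), the three boundary terms collapse to $k^{\alpha-2}(n+3)I$, $(1-\alpha)k^{\alpha-2}(n+2)I$ and $[1-\alpha+\frac{\alpha(\alpha-1)}{2}]k^{\alpha-2}(n+1)I$; the elementary identity $1-\alpha+\frac{\alpha(\alpha-1)}{2}=\frac{(\alpha-1)(\alpha-2)}{2}$ identifies them precisely with $C(n)$, so that
\[\Delta^\alpha(k^{\alpha-2}*S_\alpha)(n)=(k^{\alpha-2}*\Delta^\alpha S_\alpha)(n)+C(n).\]

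Second, I would compute the same quantity from $\Delta^\alpha u=\Delta^3(k^{3-\alpha}*u)$ together with the semigroup property (2.5): since $k^{3-\alpha}*k^{\alpha-2}=k^{1}$ with $k^{1}\equiv1$, one gets $\Delta(k^1*S_\alpha)(n)=S_\alpha(n+1)$ and hence $\Delta^\alpha(k^{\alpha-2}*S_\alpha)(n)=\Delta^3(k^1*S_\alpha)(n)=S_\alpha(n+3)-2S_\alpha(n+2)+S_\alpha(n+1)$. By Definition 3.1(iii) this equals $A(k^{\alpha-2}*S_\alpha)(n)+\gamma(k^{\alpha-2}*S_\alpha^\lambda)(n)+C(n)$. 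Equating the two expressions and cancelling $C(n)$ yields $(k^{\alpha-2}*\Delta^\alpha S_\alpha)(n)=A(k^{\alpha-2}*S_\alpha)(n)+\gamma(k^{\alpha-2}*S_\alpha^\lambda)(n)$; since $A\in B(X)$ and the scalar $\gamma$ both commute with the scalar convolution, this reads exactly $k^{\alpha-2}*T=0$ on $\mathbb N_0$.

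Finally I would deconvolve: because $k^{\alpha-2}(0)=1\neq0$, convolution by $k^{\alpha-2}$ is injective on $S(\mathbb N_0;X)$ — one recovers $T(0),T(1),\dots$ recursively, or equivalently uses $k^{2-\alpha}*k^{\alpha-2}=k^0=\delta_0$ from (2.5) — so $T\equiv0$, which is the claim. The computations are short, so the steps needing the most care are the boundary-term bookkeeping in the first computation, namely the verification that Lemma 3.3 reproduces the inhomogeneity $C(n)$ of Definition 3.1(iii), and the justification of the semigroup identities $k^{3-\alpha}*k^{\alpha-2}=k^1$ and $k^{2-\alpha}*k^{\alpha-2}=k^0$ at these non-integer and degenerate exponents. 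I note that this argument settles every $n\in\mathbb N_0$ simultaneously, so Lemma 3.4 is recovered as the special case $0\le n\le 2$ rather than being needed as a separate base step.
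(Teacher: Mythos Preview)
Your argument is correct and genuinely more economical than the paper's. Both proofs hinge on evaluating $\Delta^\alpha(k^{\alpha-2}*S_\alpha)$, but they apply Lemma~3.4 (which you call Lemma~3.3 --- your numbering is off by one throughout) with the two convolution factors in opposite roles. The paper takes $P=k^{\alpha-2}$ and uses $\Delta^\alpha k^{\alpha-2}=0$ to obtain $\Delta^\alpha(k^{\alpha-2}*S_\alpha)(n)=\Delta^2 S_\alpha(n+1)$; it then separately convolves Definition~3.1(iii) with $k^{3-\alpha}$, tracking several boundary terms by hand, and applies $\Delta^3$ to reach $\Delta^2 T(n+1)=0$, so it must invoke Lemma~3.5 --- the explicit verification at $n=0,1,2$ --- as a base case. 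You instead take $P=S_\alpha$, which makes the boundary terms reproduce $C(n)$ directly, and compute $\Delta^\alpha(k^{\alpha-2}*S_\alpha)$ a second time via the semigroup identity $k^{3-\alpha}*k^{\alpha-2}=k^1$ together with Definition~3.1(iii); equating gives $k^{\alpha-2}*T=0$ at once, and the recursive deconvolution (using $k^{\alpha-2}(0)=1$) finishes without any separate base step. Your route thus bypasses both the paper's $k^{3-\alpha}$-convolution bookkeeping and the entirety of Lemma~3.5. One small caveat: the alternative deconvolution via $k^{2-\alpha}*k^{\alpha-2}=k^0=\delta_0$ is not literally covered by (2.4)--(2.5) at the degenerate exponent $0$, so the recursive argument is the safe choice, as you already note.
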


\begin{proof}
First note that
\begin{equation}
\Delta^{\alpha}k^{\alpha-2}(n)=\Delta^3\Delta^{-(3-\alpha)}k^{\alpha-2}(n)=\Delta^3(k^{(3-\alpha)}*k^{\alpha-2})(n)=\Delta^3k^1(n)=0
\end{equation}
when $n\in \mathbb N_0$ as $k^1(n) = 1$ for all $n\in \mathbb N_0$. By (iii)  in Definition 3.1, we have
\begin{align*}
\sum_{j=0}^n k^{3-\alpha} (j) S&_\alpha(n+3 -j) -2 \sum_{j=0}^n k^{3-\alpha} (j) S_\alpha (n+2 -j) + \sum_{j=0}^n k^{3-\alpha} (j) S_\alpha (n+1 -j)\nonumber\\
&= A \sum_{j=0}^n k^{3-\alpha} (j) (k^{\alpha -2}* S_\alpha) (n -j) +\gamma \sum_{j=0}^n k^{3-\alpha}(j)( k^{\alpha-2}*S_\alpha^\lambda)(n-j)\nonumber\\
&\ \ \ \  + \sum_{j=0}^nk^{3-\alpha}(j)k^{\alpha -2} (n +3 -j)I+(1-\alpha)\sum_{j=0}^nk^{3-\alpha}(j) k^{\alpha -2} (n+2 -j)I \nonumber\\
&\ \ \ \  + \frac{(\alpha -1)(\alpha -2)}{2}\sum_{j=0}^n k^{3-\alpha}(j) k^{\alpha -2} (n +1-j)I
\end{align*}
when $n\in\mathbb{N}_0$.
This implies that
\begin{align*}
\Delta^{-(3-\alpha)}& S_\alpha (n+3) -2 \Delta^{-(3-\alpha)} S_\alpha (n+2) + \Delta^{-(3-\alpha)} S_\alpha (n+1)\\
&\ \ \ \ \ -k^{3-\alpha} (n+1) S_\alpha (2) - k^{3-\alpha} (n+2)S_\alpha (1)-S_\alpha (0) k^{3-\alpha} (n+3)\\
&\ \ \ \ \ + 2 k^{3-\alpha} (n+1)S_\alpha (1)+ 2 k^{3-\alpha} (n+2) S_\alpha (0) - k^{3-\alpha} (n+1) S_\alpha (0)\\
&= A\Delta ^{-(3-\alpha)} (k^{\alpha -2}* S_\alpha)(n)+\gamma\Delta ^{-(3-\alpha)} (k^{\alpha-2}* S_\alpha^\lambda)(n) + \Delta ^{-(3-\alpha)} k^{\alpha -2} (n+3)I\\
&\ \ \ \  +(1-\alpha)\Delta ^{-(3-\alpha)} k^{\alpha -2} (n+2)I +  \frac{(\alpha -1)(\alpha -2)}{2}\Delta ^{-(3-\alpha)} k^{\alpha -2} (n+1)I\\
&\ \ \ \ - k^{3-\alpha} (n+3)k^{\alpha-2}(0)I
 - k^{3-\alpha} (n+2)k^{\alpha -2}(1)I- k^{3-\alpha} (n+1)k^{\alpha-2}(2)I\\
&\ \ \ \ -(1-\alpha)k^{3-\alpha} (n+2)k^{\alpha-2}(0)I
-(1-\alpha)k^{3-\alpha} (n+1)k^{\alpha-2}(1)I\\
&\ \ \ \  - \frac{(\alpha -1)(\alpha -2)}{2} k^{3-\alpha} (n+1)k ^{\alpha -2}(0)I
\end{align*}
when $n\in\mathbb{N}_0$. It follows that
\begin{align*}
\Delta^{-(3-\alpha)}& S_\alpha (n+3) -2 \Delta^{-(3-\alpha)} S_\alpha (n+2) + \Delta^{-(3-\alpha)} S_\alpha (n+1)\\
 &= A\Delta ^{-(3-\alpha)} ( k^{\alpha -2}*S_\alpha)(n)+\gamma\Delta ^{-(3-\alpha)} ( k^{\alpha-2}*S_\alpha^\lambda)(n) + \Delta ^{-(3-\alpha)} k^{\alpha -2} (n+3)\\
&\ \ \ \ + \frac{(\alpha -1)(\alpha -2)}{2}\Delta ^{-(3-\alpha)} k^{\alpha -2} (n+1) +(1-\alpha)\Delta ^{-(3-\alpha)} k^{\alpha -2} (n+2)
\end{align*}
when $n\in\mathbb{N}_0$. Therefore,
\begin{align}
\Delta ^\alpha S_\alpha (n+3) -2 \Delta ^\alpha S_\alpha (n+2) + \Delta ^\alpha S_\alpha (n+1)=&A\Delta ^{\alpha} ( k^{\alpha -2}*S_\alpha )(n)\\
&+\gamma\Delta^\alpha (k^{\alpha-2}* S_\alpha^\lambda)(n).\nonumber
\end{align}

by (3.8) when $n\in\mathbb{N}_0$.

Applying Lemma 3.4 and (3.8), we obtain
\begin{align}
\Delta^{\alpha}(k^{\alpha-2}* S_\alpha)(n)=&(\Delta^{\alpha}k^{\alpha-2}* S_\alpha)(n)+k^{\alpha-2}(0)S_\alpha(n+3)\\
&\ \ \ \ +[k^{\alpha-2}(1)-\alpha k^{\alpha-2}(0)]S_\alpha(n+2)+\Big[k^{\alpha-2}(2)\nonumber\\
&\ \ \ \ -\alpha k^{\alpha-2}(1)+ \frac{\alpha (\alpha -1)}{2}k^{\alpha-2}(0)\Big]S_\alpha(n+1)\nonumber\\
&=S_\alpha(n+3)-2S_\alpha(n+2)+S_\alpha(n+1)\nonumber
\end{align}
 and
 \begin{align}
\Delta^{\alpha}(k^{\alpha-2}* S_\alpha^\lambda)(n)&=(\Delta^{\alpha}k^{\alpha-2}* S_\alpha^\lambda)(n)+k^{\alpha-2}(0)S_\alpha^\lambda(n+3)\\
&\ \ \ \ +[k^{\alpha-2}(1)-\alpha k^{\alpha-2}(0)]S_\alpha^\lambda(n+2)+\Big[k^{\alpha-2}(2)\nonumber\\
&\ \ \ \ -\alpha k^{\alpha-2}(1)+ \frac{\alpha (\alpha -1)}{2}k^{\alpha-2}(0)\Big]S_\alpha^\lambda(n+1)\nonumber\\
&=S_\alpha^\lambda(n+3)-2S_\alpha^\lambda(n+2)+S_\alpha^\lambda(n+1)\nonumber
\end{align}
when $n\in\mathbb{N}_0$.

Thus, replacing (3.10) and (3.11) in (3.9), we deduce that
\begin{equation*}
\Delta ^2 \Delta^\alpha S_\alpha (n+1) =\Delta^2 AS_\alpha(n+1)+\gamma\Delta^2 S_\alpha^\lambda(n+1)
\end{equation*}
when $n\in\mathbb{N}_0$. The conclusion follows  easily from Lemma 3.5 and the proof is completed.
\end{proof}

\subsection{{\rm \textbf{The Existence and Uniqueness of Solution}}}
\vskip .3cm
We recall the following definition introduced in  \cite{zh}, which is necessary to  establish the main result of this section.

\begin{definition} {\rm (\cite[Lemma 3.2]{zh})}
Let $2<\alpha<3$ be given, The function $h_\alpha: \mathbb N_0 \rightarrow \mathbb R$ is defined by
$h_\alpha(0)=1,~~h_\alpha(1)=\alpha-1,~~h_\alpha(2)=\frac{\alpha(\alpha-1)}{2}$, and
 \begin{equation}
 h_\alpha(n+3)+(1-\alpha) h_\alpha(n+2)+ \frac{(\alpha-1)(\alpha-2)}{2}h_\alpha(n+1)=0
 \end{equation}
 when $n\geq0$.
 \end{definition}
We observe that the Fourier transform of $h_\alpha$ is as follows:
 \begin{equation}
\hat{h}_\alpha(z)=\frac{z^3}{z^2+(1-\alpha)z+\frac{(\alpha-1)(\alpha-2)}{2}}.
\end{equation}
by Remark 3.3 in  \cite{zh}.

\vskip .3cm

Now we are ready to prove the  main result of this section.
\begin{theorem}
Let $X$ be a Banach space, $A\in B(X)$, $2<\alpha<3$, $\gamma\in\mathbb R$, $\lambda\in\mathbb N$ and let  $f\in S(\mathbb{N}_0; X)$ be given. Then $u\in S(\mathbb{N}_0; X)$   defined by $u(0) = u(1) = u(2) = 0$, and
\begin{align}
u(n)=(h_\alpha*S_\alpha *f)(n-3)
\end{align}
when $n\geq 3$, is a solution of (1.1), and this is the unique solution of (1.1).
\end{theorem}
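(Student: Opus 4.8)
The plan is to treat uniqueness and existence separately, the whole existence argument resting on the observation that the operator sequence $T := h_\alpha * S_\alpha$ satisfies the clean resolvent identity $\Delta^{\alpha}T(m) = AT(m) + \gamma T^{\lambda}(m)$ for $m\in\mathbb{N}_0$, where $T^{\lambda}(m) = T(m-\lambda)$; the inhomogeneous $k^{\alpha-2}$-terms appearing in Definition 3.1(iii) are annihilated precisely by the recurrence defining $h_\alpha$ in Definition 3.7.

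For uniqueness I would first note that, by the convolution formula (2.3) and $k^{3-\alpha}(0)=1$, together with the fact that the top coefficient in $\Delta^{3}$ equals $1$, the quantity $\Delta^{\alpha}u(n)=\Delta^{3}\Delta^{-(3-\alpha)}u(n)$ has the form $u(n+3)+R_n$, where $R_n$ depends only on $u(0),\dots,u(n+2)$. Thus (1.1) is equivalent to the explicit forward recursion $u(n+3)=Au(n)+\gamma u(n-\lambda)+f(n)-R_n$, which, together with the prescribed data $u(i)=0$ for $-\lambda\le i\le 2$, determines $u$ by strong induction on $n$; in particular the difference of two solutions with the same $f$ and the same initial data vanishes identically.

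For existence, set $w:=h_\alpha * S_\alpha * f$ and define $u(n)=w(n-3)$ for $n\ge 3$ and $u(0)=u(1)=u(2)=0$, extending $w$ by zero on negative integers. I would first reduce by a shift: since $u(0)=u(1)=u(2)=0$, formula (2.3) gives $\Delta^{-(3-\alpha)}u(n)=\Delta^{-(3-\alpha)}w(n-3)$, and applying the translation-local operator $\Delta^{3}$ yields $\Delta^{\alpha}u(n)=(\Delta^{\alpha}w)(n-3)$; hence it suffices to prove $\Delta^{\alpha}w(m)=Aw(m)+\gamma w(m-\lambda)+f(m+3)$ for $m\ge 0$ (which covers $n\ge 3$), while the low indices $n=0,1,2$ are verified directly, using $w(0)=f(0)$ and $S_\alpha(0)=S_\alpha(1)=S_\alpha(2)=I$. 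The core identity is then obtained by two applications of the convolution difference formula. Applying Lemma 3.4 with $b=h_\alpha$ and $P=S_\alpha$ (with $X$ replaced by $B(X)$), inserting Lemma 3.6 for $\Delta^{\alpha}S_\alpha$, and using $S_\alpha(0)=S_\alpha(1)=S_\alpha(2)=I$, the boundary terms collapse to $\big[h_\alpha(m+3)+(1-\alpha)h_\alpha(m+2)+\frac{(\alpha-1)(\alpha-2)}{2}h_\alpha(m+1)\big]I$, which is $0$ by Definition 3.7; this establishes $\Delta^{\alpha}T=AT+\gamma T^{\lambda}$ (one also checks $h_\alpha * S_\alpha^{\lambda}=T^{\lambda}$ from $S_\alpha(-1)=\dots=S_\alpha(-\lambda)=0$). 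Convolving once more with $f$ and differencing the operator factor $T$, the principal term becomes $Aw+\gamma w(\cdot-\lambda)$, while the boundary terms are controlled by $T(0)=I$, $T(1)=\alpha I$, $T(2)=\frac{\alpha(\alpha+1)}{2}I$; a short computation shows that the coefficients of $f(m+2)$ and of $f(m+1)$ both vanish, leaving exactly $f(m+3)$.

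The step I expect to be the main obstacle is this last bookkeeping. It requires the operator-valued analogue of Lemma 3.4 in which the differenced factor is the operator-valued sequence standing on the left of the convolution; this is proved verbatim as Lemma 3.4 since only the initial values $T(0),T(1),T(2)$ enter the boundary contribution. One must then verify that every boundary term telescopes: first through the $h_\alpha$-recurrence, which removes the inhomogeneous $k^{\alpha-2}$ terms and yields the clean equation for $T$, and then through the special values $T(0)=I,\ T(1)=\alpha I,\ T(2)=\frac{\alpha(\alpha+1)}{2}I$, which reduce the $f$-dependent boundary to the single term $f(m+3)$. Finally, the shift-by-three reduction must be handled with care at $n=0,1,2$, where the closed formula for $u$ does not apply and the identity is confirmed by direct evaluation.
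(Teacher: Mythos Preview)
Your proposal is correct and follows essentially the same route as the paper's own proof: both establish first that $T=h_\alpha*S_\alpha$ satisfies $\Delta^{\alpha}T=AT+\gamma T^{\lambda}$ by combining Lemma~3.4, Lemma~3.6 and the $h_\alpha$-recurrence (the paper's equation~(3.15)), then apply the operator-valued variant of Lemma~3.4 to $T*f$ using the values $T(0)=I,\ T(1)=\alpha I,\ T(2)=\tfrac{\alpha(\alpha+1)}{2}I$ to reduce the boundary to $f(m+3)$ (the paper's~(3.16)), perform the shift-by-three reduction (the paper's~(3.18)), verify $n=0,1,2$ directly, and argue uniqueness by the induction/forward-recursion structure of $\Delta^{\alpha}u(n)=u(n+3)+R_n$. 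The only cosmetic differences are that the paper treats existence before uniqueness and writes out the verification $(h_\alpha*S_\alpha^{\lambda}*f)(n)=(h_\alpha*S_\alpha*f)(n-\lambda)$ at the level of the triple convolution rather than at the level of $T$.
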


\begin{proof}  From Lemma 3.4, Lemma 3.6 and (3.12), we have
\begin{align}
\Delta^{\alpha}\big(h_\alpha *S_\alpha\big)(n)&=(h_\alpha*\Delta^{\alpha}S_\alpha)(n)+h_\alpha(n+3)S_\alpha(0)\\
&\ \ \ \ +h_\alpha(n+2)[S_\alpha(1)-\alpha S_\alpha(0)]+\Big[S_\alpha(2)-\alpha S_\alpha(1)\nonumber\\
&\ \ \ \ +\frac{\alpha(\alpha-1)}{2}S_\alpha(0)\Big]h_\alpha(n+1)\nonumber\\
&=(h_\alpha*\Delta^{\alpha}S_\alpha)(n)+h_\alpha(n+3)+(1-\alpha )h_\alpha(n+2)\nonumber\\
&\ \ \ \ +\frac{(\alpha-1)(\alpha-2)}{2}h_\alpha(n+1)\nonumber\\
&= A(h_\alpha* S_\alpha)(n)+\gamma (h_\alpha* S_\alpha^\lambda)(n)\nonumber
\end{align}
when $n\in \mathbb N_0$.
This, incorporating Lemma 3.4 and  Lemma 3.6,  further deduces that
\begin{align}
\Delta^{\alpha}\big(h_\alpha*S_\alpha *f\big)(n)&=(\Delta^{\alpha}(h_\alpha*S_\alpha )*f)(n)+(h_\alpha*S_\alpha )(0)f(n+3)\\
&\ \ \ \ +\big[(h_\alpha*S_\alpha )(1)-\alpha (h_\alpha*S_\alpha )(0)\big]f(n+2)\nonumber\\
&\ \ \ \ +\big[(h_\alpha*S_\alpha )(2)-\alpha (h_\alpha*S_\alpha )(1)\nonumber\\
&\ \ \ \ +\frac{\alpha(\alpha-1)}{2}(h_\alpha*S_\alpha )(0)\big]f(n+1)\nonumber\\
&=(\Delta^{\alpha}(h_\alpha*S_\alpha )*f)(n)+f(n+3)\nonumber\\
&=A(h_\alpha*S_\alpha *f)(n)+\gamma (h_\alpha*S_\alpha ^\lambda*f)(n)\nonumber\\
&\ \ \ \ +f(n+3)\nonumber
\end{align}
when $n\in \mathbb N_0$.

Notice that
\begin{align*}
(h_\alpha*S_\alpha ^\lambda*f)(n)&=\sum_{j=0}^{n}S_\alpha^\lambda(n-j)(h_\alpha*f)(j) \nonumber\\
&=\sum_{j=0}^{n-\lambda}S_\alpha(n-\lambda-j)(h_\alpha*f)(j)+\sum_{j=n-\lambda+1}^{n}S_\alpha(n-\lambda-j)(h_\alpha*f)(j)\nonumber\\
&=\sum_{j=0}^{n-\lambda}S_\alpha(n-\lambda-j)(h_\alpha*f)(j)\nonumber\\
&=(h_\alpha*S_\alpha *f)(n-\lambda)
\end{align*}
when $n\in \mathbb N_0$ as $S_\alpha(-i)= 0$ when $ i= 1, \cdots, \lambda$. This combined with (3.16) implies that
 \begin{align}
\Delta^{\alpha}\big(h_\alpha*S_\alpha *f\big)(n)&=A(h_\alpha*S_\alpha *f)(n)+\gamma (h_\alpha*S_\alpha *f)(n-\lambda)\\
&\ \ \ \ +f(n+3)\nonumber
\end{align}
when $n\in \mathbb N_0$.

Let $u: \mathbb N \to X$ be defined by $u(0) = u(1) = u(2) =0$ and $u(n) = (h_\alpha*S_\alpha   * f)(n-3)$ when $n\geq 3$.
We claim that
\begin{equation}
\Delta^\alpha u(n) = \Delta^\alpha (h_\alpha*S_\alpha   * f)(n-3)
\end{equation}
when $n\geq 3$. Indeed, since $u(0) = u(1) = u(2) =0$, we have
\begin{align*}
\Delta^{-(3-\alpha)} u(n) &= (k^{3-\alpha} * u)(n) = \sum_{j=0}^n k^{3-\alpha}(j) u(n-j)\nonumber\\
 &=\sum_{j=0}^{n-3} k^{3-\alpha}(j)(h_\alpha*S_\alpha   * f)(n-3-j)\nonumber\\
 & = \Delta ^{-(3-\alpha)}(h_\alpha*S_\alpha   * f)(n-3)
\end{align*}
when $n\geq 3$, which clearly implies that (3.18) holds as $\Delta^\alpha = \Delta^3 \Delta^{-(3-\alpha)}$ by (2.6).

 Therefore, (3.17) and (3.18) together deduce that
\begin{equation}\label{3.19}
\Delta^{\alpha}u(n)=Au(n)+\gamma u(n-\lambda)+f(n)
\end{equation}
  when $n\geq 3$ as $u(i)=0$ when $ i=-\lambda, -\lambda+1, \cdots,2$. This means that $u$  is a solution of (1.1) when $n\geq 3$.

Next we show  the equality (3.19) also holds when $0\leq n\leq 2$. Indeed, It follows from (3.14) that
\begin{align}
u(3) &= (h_\alpha*S_\alpha * f)(0) = (h_\alpha*S_\alpha) (0)f(0)\\
& = S_\alpha(0)h_\alpha(0)f(0) = f(0);\nonumber
\end{align}
\begin{align}
u(4) &= (h_\alpha*S_\alpha * f)(1) = (h_\alpha*S_\alpha) (1)f(0) + (h_\alpha*S_\alpha) (0)f(1)\\
&=[S_\alpha(0)h_\alpha(1) + S_\alpha(1)h_\alpha(0)] f(0)+ S_\alpha(0)h_\alpha(0)f(1)\nonumber\\
&=\alpha f(0) + f(1)\nonumber
\end{align}
\begin{align}
u(5) &= (h_\alpha*S_\alpha * f)(2) = (h_\alpha*S_\alpha) (2)f(0) \\
&\ \ \ \ + (h_\alpha*S_\alpha) (1)f(1) + (h_\alpha*S_\alpha) (0)f(2)\nonumber\\
&=[h_\alpha(2)S_\alpha(0) + h_\alpha(1)S_\alpha(1) + h_\alpha(0)S_\alpha(2)] f(0)\nonumber\\
 &\ \ \ + [h_\alpha(1)S_\alpha(0) + h_\alpha (0)S_\alpha (1)]f(1) +h_\alpha(0)S_\alpha(0)f(2)\nonumber\\
&=\frac{\alpha (\alpha +1)}{2} f(0) + \alpha f(1) + f(2).\nonumber
\end{align}
Thus, using the conditions $ u(0)=u(1)=u(2)=0$ and (3.20)-(3.22), we obtain
\begin{align}
\Delta^\alpha u(0) &= \Delta^3 \Delta^{-(3-\alpha)} u(0) = (k^{3-\alpha} *u)(3) = u(3) = f(0)\nonumber\\
\Delta^\alpha u(1) &= \Delta^3 \Delta^{-(3-\alpha)} u(1) =  (k^{3-\alpha} *u)(4) -3( k^{3-\alpha} *u)(3)\nonumber\\
& =  k^{3-\alpha}(0) u(4) +  k^{3-\alpha}(1)u(3) -3 k^{3-\alpha}(0)u(3)\nonumber\\
& = u(4) - \alpha u(3) = f(1)\nonumber
\end{align}
\begin{align}
\Delta^\alpha u(2) &= \Delta^3 \Delta^{-(3-\alpha)} u(2) =  (k^{3-\alpha} *u)(5) -3 (k^{3-\alpha} *u)(4) + 3(k^{3-\alpha}*u)(3)\nonumber\\
& =  k^{3-\alpha}(0) u(5) + k^{3-\alpha}(1)u(4) + k^{3-\alpha}(2)u(3) -3k^{3-\alpha}(0)u(4)\nonumber\\
&\ \ \ \ -3 k^{3-\alpha}(1)u(3)+ 3k^{3-\alpha}(0)u(3)\nonumber\\
& = u(5) - \alpha u(4) +\frac{\alpha(\alpha -1)}{2} u(3) = f(2).\nonumber
\end{align}
Therefore, $\Delta^{\alpha}u(n)=Au(n) +\gamma u(n-\lambda)+ f(n)$  when $0\leq n\leq 2$ as $u(i)=0$ when $ i=-\lambda, -\lambda+1, \cdots, 2$. We have shown that $u\in S(\mathbb{N}_0; X)$ given by (3.14) is a solution of (1.1).

It remains to show that the solution is unique. Clearly, we only need to show that  $0 \in S(\mathbb{N}_0; X)$ is the unique solution of the following homogeneous equation
\begin{equation}
 \left\{\begin{array}{ll}
\Delta^{\alpha}u(n)=Au(n)+\gamma u(n-\lambda), \ (n\in\mathbb{N}_0) \\
 u(i)=0~\text{ when}~ i=-\lambda, -\lambda+1, \cdots, 2.
  \end{array}
\right.
\end{equation}
Let $u\in S(\mathbb{N}_0; X)$ be a solution of equation (3.23).
We first show that $u(3) =0$. The identity $\Delta^{\alpha}u(0)=Au(0)+\gamma u(-\lambda) $ implies that $\Delta^{\alpha}u(0)= 0$. On the other hand,
\begin{align*}
\Delta^{\alpha}u(0)&=\Delta^3 \Delta^{-(3-\alpha)} u(0)\\
 &= \Delta^{-(3-\alpha)}u(3) - 3\Delta^{-(3-\alpha)}u(2) + 3\Delta^{-(3-\alpha)}u(1) + \Delta^{\alpha-3}u(0)\\
& = (k^{3-\alpha}*u)(3) - 3(k^{3-\alpha}*u)(2) + 3(k^{3-\alpha}*u)(1) + (k^{3-\alpha}*u)(0)\\
& = k^{3- \alpha}(0) u(3) = u(3)
\end{align*}
by the assumption $u(0)=u(1)=u(2) =0$. Consequently, $u(3) =0$

Assume that $u(n)= 0$ for all $3\leq n\leq k$ for some $k\geq 3$, we are going to show that $u(k+1) = 0$. Since $k -2 < k$, we have $u(k-2) = 0$ by assumption. Thus  $\Delta ^\alpha u(k-2) = Au(k-2)+\gamma u(k-2-\lambda) = 0$. On the other hand,
\begin{align*}
\Delta ^\alpha u(k-2) & =\Delta^3 \Delta^{-(3-\alpha)} u(k-2)\\
&= \Delta^{-(3-\alpha)}u(k+1) - 3\Delta^{-(3-\alpha)}u(k) + 3\Delta^{-(3-\alpha)}u(k-1) + \Delta^{-(3-\alpha)}u(k-2)\\
& = k^{3-\alpha}*u(k+1) - 3k^{3-\alpha}*u(k) + 3k^{3-\alpha}*u(k-1) + k^{3-\alpha}*u(k-2)\\
& = k^{3- \alpha}(0) u(k+1) = u(k+1).
\end{align*}
 by assumption $u(k-2) = u(k-1) = u(k) =0$.  Consequently, $u(k+1) = 0$ when $3\leq n\leq k$. Therefore, $u(n) =0$ for all $n\in\mathbb{N}_0$. This completes the proof.
   \end{proof}

\section{A Characterization of the $\ell^p$-Maximal Regularity}

Let $A\in B(X)$ and $f\in S(\mathbb{N}_0; X)$, where $X$ is a Banach space.  In this section, we study the $\ell^p$-maximal regularity for the fractional difference  equation with delay $\lambda\in\mathbb N$:
\begin{equation}
 \left\{\begin{array}{ll}
 &\Delta^{\alpha}u(n)=Au(n)+\gamma u(n-\lambda)+f(n),\ (n\in \mathbb N_0) \\
 &u(i)=0~\text{when}~ i=-\lambda, -\lambda+1,\cdots, 2,
   \end{array}
\right.
\end{equation}
where $1< p <\infty$, $2<\alpha < 3$ and $\gamma\in\mathbb R$.

Let $f\in S(\mathbb{N}_0; X)$ be given. By Theorem 3.8, the unique solution of (4.1) can be represented by $u(0) = u(1) = u(2) =0$ and
\begin{equation}
u(n)= (h_\alpha*S_\alpha *f\big)(n-3)
\end{equation}
when $n\geq 3$.
This means that $\Delta ^\alpha u(0)=f(0)$, $\Delta ^\alpha u(1)=f(1)$, $\Delta ^\alpha u(2)=f(2)$ and
\begin{align}
\Delta^{\alpha}u(n)&=A(h_\alpha*S_\alpha *f\big)(n-3)+\gamma (h_\alpha*S_\alpha *f\big)(n-3-\lambda)+f(n)\\
&=A(h_\alpha*S_\alpha *f\big)(n-3)+\gamma (h_\alpha*S_\alpha ^\lambda*f\big)(n-3)+f(n)\nonumber
\end{align}
when $n\geq 3$ as $u(i)=0$ when $i=-\lambda, -\lambda+1, \cdots,2$.
\vskip .3cm
In analogy to cases  $\alpha=1$ and $\alpha=2$ (see for instance \cite{bl}),  we introduce the following definition concerning maximal regularity.
\begin{definition}
 Let $1< p <\infty$, $2<\alpha<3$, $\lambda\in\mathbb N$ and let $A\in B(X)$ be given. We say  that (4.1) has the $\ell^p$-maximal regularity if
\begin{equation}
(\mathcal{E}_\alpha f)(n):=A(h_\alpha*S_\alpha   * f) (n) =A\sum_{j=0}^{n}(h_\alpha*S_\alpha )(n-j)f(j)\nonumber
\end{equation}
\begin{equation}
(\mathcal{F}_\alpha f)(n):=(h_\alpha * S_\alpha^\lambda * f) (n) =\sum_{j=0}^{n}(h_\alpha*S_\alpha ^\lambda)(n-j)f(j)\nonumber
\end{equation}
when $n\in \mathbb N_0$, define  two bounded linear operators $\mathcal{E}_\alpha, \mathcal{F}_\alpha$ on  $\ell^p(\mathbb N_0; X)$.
\end{definition}

It is easy to see that when (4.1) has the $\ell^p$-maximal regularity, then for all $f\in \ell^p(\mathbb{N}_0; X)$, the unique solution $u$ of (4.1) given by (4.2) satisfies $\Delta^{\alpha}u\in \ell^p(\mathbb N_0; X)$ by (4.3).
\vskip .3cm
We will need  the following hypothesis:
$$(\mathcal{C}_\alpha):~~~~ \sup_{n\in\mathbb N_0}\big\|S_\alpha(n)\big\|<\infty~~{\rm and}~~z^{3-\alpha}(z-1)^\alpha-\gamma z^{-\lambda}\in\rho(A)~~{\rm for~~all}~~|z|=1, \ z\not= \pm 1.$$
Now we are ready to state  the main result of this section.

\begin{theorem}
Let $X$ be a UMD space, $2<\alpha<3$, $\gamma\in\mathbb R$, $\lambda\in\mathbb N$ and let $1 < p < \infty$. Assume that $A\in B(X)$ and the assumption $(\mathcal{C}_\alpha)$ holds. Then the following statements are equivalent:
\begin{enumerate}
\item[$(i)$] $(4.1)$ has the $\ell^p$-maximal regularity;
\item[$(ii)$] the sets
\begin{equation}
\big\{e^{(3-\alpha)it}(e^{it}-1)^\alpha\big[e^{(3-\alpha)it}(e^{it}-1)^\alpha-\gamma e^{-\lambda it} -A\big]^{-1}:\ t\in \mathbb{T}\big\}
\end{equation}
and
\begin{equation}
\big\{e^{-\lambda it}[e^{(3-\alpha)it}(e^{it}-1)^\alpha-\gamma e^{-\lambda it} -A\big]^{-1}:\ t\in \mathbb{T}\big\}
\end{equation}
\end{enumerate}
are $R$-bounded.
\end{theorem}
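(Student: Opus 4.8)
The plan is to convert the $\ell^p$-maximal regularity of (4.1) into an operator-valued Fourier multiplier problem on $\ell^p(\mathbb Z;X)$ and then to apply Blunck's Theorems 2.4 and 2.5. Writing $z=e^{it}$ with $t\in\mathbb T$, put
\[
g(z)=z^{3-\alpha}(z-1)^\alpha,\qquad R(z)=\big[g(z)-\gamma z^{-\lambda}-A\big]^{-1},
\]
which is well defined on $\mathbb T$ by $(\mathcal C_\alpha)$. Extending sequences on $\mathbb N_0$ by zero to $\mathbb Z$, the operators $\mathcal E_\alpha$ and $\mathcal F_\alpha$ become (causal) convolution operators on $\ell^p(\mathbb Z;X)$; combining $\widehat{h_\alpha*S_\alpha}=\hat h_\alpha\,\hat S_\alpha$ from (3.13) and Remark 3.3, the quadratic factor of $\hat h_\alpha$ cancels the one appearing in $\hat S_\alpha$, so that the symbol of $\mathcal E_\alpha$ is $e^{imt}AR(z)$ and that of $\mathcal F_\alpha$ is $e^{imt}z^{-\lambda}R(z)$ for a fixed integer $m$ (an $X$-independent unimodular scalar). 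The algebraic bridge to the sets (4.4)--(4.5) is the resolvent identity
\[
AR(z)=g(z)R(z)-\gamma z^{-\lambda}R(z)-I,
\]
which, together with Remark 2.3, lets me pass freely between $R$-boundedness of the families $\{AR\}$, $\{gR\}$, $\{z^{-\lambda}R\}$, $\{I\}$ and absorb the unimodular factors $e^{imt}$.

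For $(i)\Rightarrow(ii)$ I argue as follows. If $\mathcal E_\alpha$ and $\mathcal F_\alpha$ are bounded on $\ell^p$, then their symbols are $\ell^p$-Fourier multipliers; these symbols are continuous on $\mathbb T$ (the resolvent is continuous where it exists), and the boundedness of a multiplier operator forces its symbol to be bounded in $B(X)$ on $\mathbb T$, so Blunck's Theorem 2.5 applies and yields that $\{e^{imt}AR\}$ and $\{e^{imt}z^{-\lambda}R\}$ are $R$-bounded. Dropping the unimodular factor gives that $\{AR\}$ and $\{z^{-\lambda}R\}$ are $R$-bounded; the latter family is exactly (4.5), while feeding $\{AR\}$, $\{z^{-\lambda}R\}$ and $\{I\}$ into the resolvent identity produces the $R$-boundedness of $\{gR\}$, i.e. (4.4).

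For $(ii)\Rightarrow(i)$ I verify the hypotheses of Blunck's Theorem 2.4 for both symbols. The resolvent identity and Remark 2.3 give at once that $\{e^{imt}AR\}$ and $\{e^{imt}z^{-\lambda}R\}$ are $R$-bounded. For the derivative condition I differentiate along $z=e^{it}$, using $\tfrac{d}{dt}R=-izR\big(g'+\gamma\lambda z^{-\lambda-1}\big)R$ and the logarithmic derivative $g'(z)=g(z)\big(\tfrac{3-\alpha}{z}+\tfrac{\alpha}{z-1}\big)$. Every term of $\tfrac{d}{dt}(gR)$ and $\tfrac{d}{dt}(z^{-\lambda}R)$ is then a scalar multiple of a product of the two $R$-bounded families $\{gR\}$ and $\{z^{-\lambda}R\}$, the only non-bounded scalar being $\tfrac{1}{z-1}$ arising from $g'/g$. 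Multiplying by $(e^{it}-1)(e^{it}+1)=z^2-1$ converts this scalar into $\alpha(z+1)+\tfrac{(3-\alpha)(z^2-1)}{z}$, which is bounded on $\mathbb T$; hence $\{(e^{it}-1)(e^{it}+1)M'(t)\}$ is $R$-bounded for both symbols $M$. Theorem 2.4 then shows the symbols are $\ell^p$-multipliers, so $\mathcal E_\alpha,\mathcal F_\alpha$ are bounded and (4.1) enjoys $\ell^p$-maximal regularity.

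I expect two points to require the most care. The first is the rigorous identification of $\mathcal E_\alpha,\mathcal F_\alpha$ with multiplier operators of the stated symbols: because the characteristic roots of the recurrence defining $h_\alpha$ lie off the unit circle, $h_\alpha$ itself grows geometrically, and it is only the built-in vanishing of $\hat S_\alpha$ at those roots, together with $\sup_n\|S_\alpha(n)\|<\infty$ from $(\mathcal C_\alpha)$, that makes $h_\alpha*S_\alpha$ carry the clean symbol $e^{imt}R(z)$; this correspondence should be established first on finitely supported sequences. The second, and more technical, is the derivative estimate: one must check that after inserting the logarithmic derivative of $g$ the sole singular scalar $1/(z-1)$ is exactly neutralised by the factor $(z-1)(z+1)$ furnished by Theorem 2.4. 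The restriction to $\mathbb T=(-\pi,0)\cup(0,\pi)$, which excludes $z=\pm1$, is what keeps $R(z)$ and all the families above well defined, since $(\mathcal C_\alpha)$ guarantees invertibility only away from $z=\pm1$.
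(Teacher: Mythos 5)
Your proposal follows the paper's own proof essentially step for step: both directions reduce the boundedness of $\mathcal{E}_\alpha$ and $\mathcal{F}_\alpha$ to an operator-valued multiplier problem for the symbols $e^{imt}A\big[g_\alpha-\gamma e^{-\lambda it}-A\big]^{-1}$ and $e^{-\lambda it}\big[g_\alpha-\gamma e^{-\lambda it}-A\big]^{-1}$ coming from $\widehat{h_\alpha*S_\alpha}$, pass between the families $\{AR\}$, $\{g_\alpha R\}$, $\{e^{-\lambda it}R\}$ via the resolvent identity and Remark 2.3, and invoke Blunck's Theorems 2.5 and 2.4, with the factor $(e^{it}-1)(e^{it}+1)$ neutralising the singular term $\alpha/(e^{it}-1)$ of the logarithmic derivative exactly as in the paper. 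The approach and all key steps coincide with the paper's argument.
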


\begin{proof}
We first prove that the implication (ii) $\Rightarrow$ (i) is true. Let
  \begin{align*}
    g_\alpha(t)&=e^{3it}(1-e^{-it})^\alpha,\ G_1(t)=g_\alpha(t)\big[g_\alpha(t)-\gamma e^{-\lambda it}-A\big]^{-1},\\
   & \ G_2(t)=e^{-\lambda it}\big[g_\alpha(t)-\gamma e^{-\lambda it}-A\big]^{-1}
  \end{align*}
when $t\in \mathbb{T}$. Then
\begin{align*}
&g_\alpha'(t)=3ig_\alpha(t)+\frac{\alpha ig_\alpha(t)}{e^{it}-1}=(3i+\frac{\alpha i}{e^{it}-1})g_\alpha(t)\\
&G_1'(t)= (3i+\frac{\alpha i}{e^{it}-1})\Big(G_1(t)-G_1^2(t)\Big)-\gamma\lambda i G_1(t)G_2(t)\\
&G_2'(t)=-\gamma iG_2(t)-(3i+\frac{\alpha i}{e^{it}-1})G_1(t)G_2(t)-\gamma\lambda iG_2^2(t)
\end{align*}
when $t\in \mathbb{T}$.
Thus the sets
\begin{equation*}
 \big\{(e^{it}-1)(e^{it}+1)G_1'(t): \ t\in \mathbb{T}\big\}
\end{equation*}
and
\begin{equation*}
\big\{(e^{it}-1)(e^{it}+1)G_2'(t): \ t\in \mathbb{T}\big\}
\end{equation*}
are $R$-bounded by assumption and Remark 2.3. By Theorem 2.4,
 there exist
 $T_\alpha, \ L_\alpha\in B(\ell^p(\mathbb Z;X))$ such that
\begin{equation}
\widehat{T_\alpha f}(e^{it})=G_1(t)\hat{f}(e^{it})
\end{equation}
and
\begin{equation}
\widehat{L_\alpha f}(e^{it})=G_2(t)\hat{f}(e^{it})
\end{equation}
when $t\in \mathbb{T}$ for all $f\in \ell^p(\mathbb Z;X)$ with finite support.
 From (4.6), (4.7) and the trivial identity
\begin{equation*}
A\big[g_\alpha(t)-\gamma e^{-\lambda it}-A\big]^{-1}= (g_\alpha(t)-\gamma e^{-\lambda it})\big[g_\alpha(t)-\gamma e^{-\lambda it}-A\big]^{-1}-I
\end{equation*}
 we deduce that
\begin{align}
A\big[g_\alpha(t)-\gamma e^{-\lambda it}-A\big]^{-1}\hat{f}(e^{it})&= (g_\alpha(t)-\gamma e^{-\lambda it})\big[g_\alpha(t)\\
&\ \ \ \ -\gamma e^{-\lambda it}-A\big]^{-1}\hat{f}(e^{it})-\hat{f}(e^{it})\nonumber
\end{align}
 defines a bounded linear operator $K_\alpha\in B(\ell^p(\mathbb Z;X))$ given by $K_\alpha f(n)=T_\alpha f(n)-\gamma L_\alpha f(n)-f(n)$ for all $f\in \ell^p(\mathbb Z;X)$ with finite support.

 It follows from Remark 3.3 and (3.13) that
\begin{align}
&\widehat {h_\alpha*S_\alpha  }(e^{it}) = e^{3it}\big[g_\alpha(t)-\gamma e^{-\lambda it}-A\big]^{-1}, \\
&\widehat {S_\alpha^\lambda * h_\alpha}(e^{it}) = e^{-\lambda it}\big[g_\alpha(t)-\gamma e^{-\lambda it}-A\big]^{-1}\nonumber
\end{align}
when $t\in \mathbb{T}$. These together with (4.8) imply that
\begin{equation}
\widehat{(\mathcal{E}_\alpha f)}(e^{it}) = e^{3it}A\big[g_\alpha(t)-\gamma e^{-\lambda it}-A\big]^{-1}\hat f(e^{it})
\end{equation}
and
\begin{equation}
\widehat{(\mathcal{F}_\alpha f)}(e^{it}) = e^{-\lambda it}\big[g_\alpha(t)-\gamma e^{-\lambda it}-A\big]^{-1}\hat f(e^{it})=\widehat{L_\alpha f}(e^{it})
\end{equation}
when $t\in \mathbb{T}$ for all $f\in \ell^p(\mathbb N_0;X)$ with finite support. Hence $\mathcal{E}_\alpha $ and $\mathcal{F}_\alpha $ are bounded linear operators on $\ell^p(\mathbb{N}_0; X)$ by (4.7) and (4.8), respectively. Here we have used the fact that the set of all $f\in \ell^p(\mathbb N_0;X)$ with finite support is dense in $\ell^p(\mathbb N_0;X)$. We have shown that
(4.1) has $\ell^p$-maximal regularity. Hence the implication (ii) $\Rightarrow$ (i) is valid.

Now assume that (i) holds true. Then
\begin{equation} \nonumber
(\mathcal{E}_\alpha f)(n)=\left\{\begin{aligned}
 &A(h_\alpha*S_\alpha *f)(n), \ (n\geq3);\\
  &0,\ \ \ \ \ \text{otherwise}.
 \end{aligned}
\right.
\end{equation}
and
\begin{equation} \nonumber
(\mathcal{F}_\alpha f)(n)=\left\{\begin{aligned}
 &(h_\alpha*S_\alpha ^\lambda*f)(n), \ (n\geq3); \\
  &0,\ \ \ \ \ \ \text{otherwise}.
 \end{aligned}
\right.
\end{equation}
define two bounded linear operators $\mathcal{E}_\alpha , \mathcal{F}_\alpha \in B(\ell^p(\mathbb N_0; X))$. It follows from (4.10)-(4.11) that
\begin{align}
& \widehat{(\mathcal{E}_\alpha f)}(e^{it})= e^{3it}T\big[g_\alpha(t)-\gamma e^{-\lambda it}-A\big]^{-1}\hat f(e^{it}),\nonumber\\
&\widehat{(\mathcal{F}_\alpha f)}(e^{it}) = e^{-\lambda it}\big[g_\alpha(t)-\gamma e^{-\lambda it}-A\big]^{-1}\hat f(e^{it})\nonumber
\end{align}
when $t\in \mathbb{T}$ for all $f\in \ell^p(\mathbb N_0;X)$ with finite support. Since the convolution operators $\mathcal{E}_\alpha $ and $\mathcal{F}_\alpha $ are translation invariant on $\ell^p(\mathbb{Z}; X)$, $\mathcal{E}_\alpha $ and $\mathcal{F}_\alpha $  can extend to  bounded linear operators on $\ell^p(\mathbb Z;X)$.  It follows from (2.9) that the functions $t\to e^{3it}A\big[g_\alpha(t)-\gamma e^{-\lambda it}-A\big]^{-1}$ and $t\to  e^{-\lambda it}\big[g_\alpha(t)-\gamma e^{-\lambda it}-A\big]^{-1}$ are $\ell^p$-Fourier multiplier.

Thus the sets
\begin{equation*}
\big\{e^{3it}A\big[g_\alpha(t)-\gamma e^{-\lambda it}-A\big]^{-1}:\ t\in \mathbb{A}\big\}~{\rm and}~\big\{e^{-\lambda it}\big[g_\alpha(t)-\gamma e^{-\lambda it}-A\big]^{-1}:\ t\in \mathbb{T}\big\}
\end{equation*}
 are $R$-bounded by Theorem 2.5.

This combined with Remark 2.3 and the trivial equality
\begin{equation*}
g_\alpha(t)\big[g_\alpha(t)-\gamma e^{-\lambda it}-A\big]^{-1}=T\big[g_\alpha(t)-\gamma e^{-\lambda it}-A\big]^{-1} +\gamma e^{-\lambda it}\big[g_\alpha(t)-\gamma e^{-\lambda it}-A\big]^{-1}+ I
\end{equation*}
 implies that the set
\begin{equation*}
\big\{g_\alpha(t)\big[g_\alpha(t)-\gamma e^{-\lambda it}-A\big]^{-1}:\ t\in \mathbb{T}\big\}
\end{equation*}
is $R$-bounded. This completes the proof.
\end{proof}

Since the second condition in Theorem 4.2 does not depend on the parameter $1 < p < \infty$, we have the following immediate consequence.

\begin{corollary}
Let $X$ be a UMD space, $2<\alpha<3$, $\gamma\in\mathbb R$ and $\lambda\in\mathbb N$. Assume that $A\in B(X)$  and  the assumption $(\mathcal{C}_\alpha)$ holds. If $(4.1)$ has the $\ell^p$-maximal regularity for some $1 < p < \infty$, then it has the $\ell^p$-maximal regularity for all $1 < p < \infty$.
\end{corollary}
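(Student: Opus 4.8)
The plan is to derive the corollary entirely from Theorem 4.2, exploiting the single structural feature that makes it possible: the $R$-boundedness conditions in part (ii) of that theorem are stated purely in terms of the data $A$, $\alpha$, $\gamma$, $\lambda$ and the variable $t\in\mathbb{T}$, and carry no reference whatsoever to the exponent $p$. Thus Theorem 4.2 realizes the $\ell^p$-maximal regularity of $(4.1)$ as the specialization, at a particular $p$, of a $p$-independent property. The whole argument is then a matter of transporting that property from one value of $p$ to another.

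Concretely, I would first fix the exponent $p_0\in(1,\infty)$ for which $(4.1)$ is assumed to possess $\ell^{p_0}$-maximal regularity. Since $X$ is a UMD space and the standing hypothesis $(\mathcal{C}_\alpha)$ holds, all the assumptions of Theorem 4.2 are met with $p=p_0$, and its implication $(i)\Rightarrow(ii)$ yields that the two sets
\begin{equation*}
\big\{e^{(3-\alpha)it}(e^{it}-1)^\alpha\big[e^{(3-\alpha)it}(e^{it}-1)^\alpha-\gamma e^{-\lambda it}-A\big]^{-1}:\ t\in\mathbb{T}\big\}
\end{equation*}
and
\begin{equation*}
\big\{e^{-\lambda it}\big[e^{(3-\alpha)it}(e^{it}-1)^\alpha-\gamma e^{-\lambda it}-A\big]^{-1}:\ t\in\mathbb{T}\big\}
\end{equation*}
are $R$-bounded. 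I would emphasize here that this conclusion about $(4.4)$ and $(4.5)$ is an intrinsic statement about subsets of $B(X)$ and the geometry of $X$ alone.

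Next I would take an arbitrary $q\in(1,\infty)$ and run Theorem 4.2 in the reverse direction. The hypotheses of that theorem—namely that $X$ be UMD and that $(\mathcal{C}_\alpha)$ hold—do not involve the integrability exponent, so the theorem applies verbatim with $p=q$. Because the sets $(4.4)$ and $(4.5)$ were already shown to be $R$-bounded, and this $R$-boundedness is unchanged when we pass from $p_0$ to $q$, the implication $(ii)\Rightarrow(i)$ of Theorem 4.2 delivers the $\ell^q$-maximal regularity of $(4.1)$. As $q\in(1,\infty)$ was arbitrary, $(4.1)$ enjoys $\ell^p$-maximal regularity for every $1<p<\infty$, which is the assertion of the corollary.

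There is essentially no analytic obstacle in this argument; its entire substance is already compressed into Theorem 4.2, and the corollary is the clean logical consequence of the equivalence there having a $p$-free right-hand side. The only point that deserves a word of care is that $R$-boundedness is not tied to any particular $L^p$-norm: although Definition 2.2 is phrased with the $L^1([0,1];X)$ norm, the property is equivalent across all such Bochner norms by Kahane's inequality, so no hidden dependence on the exponent can re-enter through the definition of $R$-boundedness itself. Once this is acknowledged, the passage from "some $p$" to "all $p$" is immediate.
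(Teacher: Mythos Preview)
Your proposal is correct and follows exactly the approach the paper takes: the corollary is stated as an immediate consequence of the fact that condition (ii) in Theorem 4.2 is independent of $p$, so one passes from $\ell^{p_0}$-maximal regularity to the $R$-boundedness of (4.4)--(4.5) via $(i)\Rightarrow(ii)$, and then back to $\ell^q$-maximal regularity for arbitrary $q$ via $(ii)\Rightarrow(i)$. The paper does not even write out a proof beyond the one-line remark preceding the statement, so your argument is simply a faithful expansion of that remark.
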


If the underlying Banach space $X$ is a  Hilbert space, then the $R$-boundedness coincides with the norm boundedness \cite{ar}. This together with Theorem 4.2 give the following result.

\begin{corollary}
Let $H$ be a Hilbert space, $\gamma\in\mathbb R$, $\lambda\in\mathbb N$ and let $1 < p < \infty$. Assume that $A\in B(X)$  and the assumption $(\mathcal{C}_\alpha)$ holds. Then $(4.1)$ has the $\ell^p$-maximal regularity if and only if there exists $C > 0$  such that
\begin{equation}
\sup_{t\in \mathbb{T}}\big\Vert e^{(3-\alpha)it}(e^{it}-1)^\alpha\big[e^{(3-\alpha)it}(e^{it}-1)^\alpha-\gamma e^{-\lambda it}-A\big]^{-1}\big\Vert < \infty\nonumber
\end{equation}
\begin{equation}
\sup_{t\in \mathbb{T}}\big\Vert e^{-\lambda it}[e^{(3-\alpha)it}(e^{it}-1)^\alpha-\gamma e^{-\lambda it} -A\big]^{-1}\big\Vert < \infty\nonumber
\end{equation}
\end{corollary}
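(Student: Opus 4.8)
The plan is to obtain this statement as an immediate consequence of Theorem~4.2 together with the fact, recorded in \cite{ar}, that a subset of $B(H)$ is $R$-bounded if and only if it is uniformly bounded in operator norm whenever $H$ is a Hilbert space. The first observation I would make is that every Hilbert space is a UMD space; hence, under the hypothesis $(\mathcal{C}_\alpha)$, Theorem~4.2 applies verbatim with the underlying space taken to be $H$, and it tells us that $(4.1)$ has the $\ell^p$-maximal regularity if and only if the two operator families $(4.4)$ and $(4.5)$ are $R$-bounded.

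The second and final step is to translate $R$-boundedness into the two stated norm bounds. Since $H$ is a Hilbert space, the criterion from \cite{ar} shows that the family $(4.4)$ is $R$-bounded precisely when $\sup_{t\in\mathbb{T}}\bigl\| e^{(3-\alpha)it}(e^{it}-1)^\alpha[e^{(3-\alpha)it}(e^{it}-1)^\alpha-\gamma e^{-\lambda it}-A]^{-1}\bigr\|<\infty$, and likewise the family $(4.5)$ is $R$-bounded precisely when $\sup_{t\in\mathbb{T}}\bigl\| e^{-\lambda it}[e^{(3-\alpha)it}(e^{it}-1)^\alpha-\gamma e^{-\lambda it}-A]^{-1}\bigr\|<\infty$. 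These two suprema are exactly the quantities appearing in the statement of the corollary, so combining the two equivalences delivers the claimed characterization: the $\ell^p$-maximal regularity holds if and only if both suprema are finite.

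Before invoking these equivalences I would record one routine point, namely that the two operator families of the corollary coincide with the families $(4.4)$ and $(4.5)$ of Theorem~4.2; this is immediate from the identity $e^{3it}(1-e^{-it})^\alpha=e^{(3-\alpha)it}(e^{it}-1)^\alpha$, which identifies the symbol $g_\alpha(t)$ used in the proof of Theorem~4.2 with the expression $e^{(3-\alpha)it}(e^{it}-1)^\alpha$ displayed here. There is no genuine obstacle in this argument: the entire analytic content---the explicit representation of the solution via the $\alpha$-resolvent sequence and the reduction to an operator-valued Fourier multiplier problem through Blunck's theorems---has already been carried out in Theorem~4.2, so the present proof rests solely on the passage from the general UMD setting to the Hilbert space setting via the equivalence of $R$-boundedness and norm boundedness. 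The only mild subtlety worth flagging is that this equivalence is special to Hilbert spaces, so no analogous uniform-norm characterization should be expected on a general UMD space.
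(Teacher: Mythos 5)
Your proposal is correct and coincides with the paper's own argument: the corollary is deduced directly from Theorem~4.2 by observing that every Hilbert space is UMD and that, on a Hilbert space, $R$-boundedness of a family in $B(H)$ is equivalent to uniform norm boundedness (citing \cite{ar}). Your additional verification of the identity $e^{3it}(1-e^{-it})^\alpha=e^{(3-\alpha)it}(e^{it}-1)^\alpha$, matching the sets of the corollary with $(4.4)$ and $(4.5)$, is a harmless and accurate elaboration of what the paper leaves implicit.
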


In the following example, served as an application of  Theorem 4.1, we  provide a criterion that ensures the $\ell^p$-maximal regularity of equation $(4.1)$  under a suitable assumption on the operator $A$. We observe that this example was studied by many authors \cite{li5, le} in the case  $0<\alpha < 2$. The argument used in \cite{li5, le} works as well in the case $2 < \alpha < 3$.
\begin{example}
Let $1 < p < \infty$, $2<\alpha<3$, $\gamma\in\mathbb R$, $\lambda\in\mathbb N$. Let $H$ be a Hilbert space and $A\in B(H)$ satisfying the following condition
 \begin{equation*}
 (\mathfrak{C})\ \ \ \ \|A\|<\omega_f:=\min_{t\in \mathbb{T}}|f_{\alpha,\gamma,\lambda}(t)|<1
  \end{equation*}
   where $f_{\alpha,\gamma,\lambda}(t)=e^{(3-\alpha)it}(e^{it}-1)^\alpha -\gamma e^{-\lambda it}$.
It follows form the condition $\mathfrak{C}$ and Theorem 7.3-4 in \cite{kr}  that $f_{\alpha,\gamma,\lambda}\in \rho(A)$,
\begin{equation*}
(f_{\alpha,\gamma,\lambda}-A)^{-1}=\sum_{j=0}^{\infty}\frac{A^j}{\Big(f_{\alpha,\gamma,\lambda}(t)\Big)^{j+1}}~~~{\rm and}~~~ \|(f_{\alpha,\gamma,\lambda}-A)^{-1}\|\leq\frac{1}{\omega_f-\|A\|}
 \end{equation*}
when  $t\in \mathbb{T}$.

Again using condition $\mathfrak{C}$, there exists a circle $\Gamma$ centered at the origin of the complex plane with  radius $r<1$ such that
\begin{equation} \nonumber
S_\alpha (n)=\left\{\begin{aligned}
 &\frac{1}{2\pi i}\int_{\|z\|=r}z^n\big[z^2+(1-\alpha)z+\frac{(\alpha-1)(\alpha-2)}{2}\big]\big[z^{3-\alpha}(z-1)^\alpha\\
 &\ \ \ \ \ -\gamma z^{-\lambda}-A\big]^{-1}dz, \ (n\geq3) \\
 &I, ~\text{when}~ n=0, 1, 2, \\
  &0,~\text{when}~ n=-\lambda, -\lambda+1,\cdots, -1.
 \end{aligned}
\right.
\end{equation}
It follows that
\begin{equation*}
\|S_\alpha (n)\|<\frac{4}{\omega_f-\|A\|}
 \end{equation*}
 when $n\in\mathbb{N}_0$.
It is easy to verify that the sets
\begin{equation}
\big\{e^{(3-\alpha)it}(e^{it}-1)^\alpha\big[e^{(3-\alpha)it}(e^{it}-1)^\alpha-\gamma e^{-\lambda it} -A\big]^{-1}:\ t\in \mathbb{T}\big\}\nonumber
\end{equation}
and
\begin{equation}
\big\{e^{-\lambda it}[e^{(3-\alpha)it}(e^{it}-1)^\alpha-\gamma e^{-\lambda it} -A\big]^{-1}:\ t\in \mathbb{T}\big\}\nonumber
\end{equation}
are bounded. Hence (4.1) has the $\ell^p$-maximal regularity by Corollary 4.4.
 \end{example}
\vskip .3cm

\bibliographystyle{amsalpha}

\begin{thebibliography}{AcBeRu}



\bibitem{ab} T. Abdeljawad and F. M. Atici: On the definitions of nabla fractional operators. Abstr. Appl. Anal. 2012 (2012), 1-13.
\bibitem{ag} R. P. Agarwal, C. Cuevas and C. Lizama: Regularity of Difference Equations on Banach Spaces. Springer-Verlag, Cham, 2014.

\bibitem{ar} W. Arendt and S. Bu: The operator-valued Marcinkiewicz multiplier theorem and maximal regularity. Math. Z. 240 (2002),

\bibitem{at1} F. M. Atici and P. W. Eloe: Initial value problems in discrete fractional calculus. Proc. Amer. Math. Soc. 137 (2009), no. 3, 981-989.

\bibitem{bl} S. Blunck: Maximal regularity of discrete and continuous time evolution equations. Studia Math. 146 (2001), no. 2, 157-176.
\bibitem{bu} D.L. Burkh\"{o}lder: Martingale transforms and the geometry of Banach spaces. Lecture Notes in Mathematics, Vol. 860, Springer, Berlin, 1981,  35-50.

\bibitem{go} C. S. Goodrich and C. Lizama: A transference principle for nonlocal operators using a convolutional approach: Fractional monotonicity and convexity. Israel J. Math. 236 (2020), 533-589.

 \bibitem{gi} I. Girona and M. Murillo-Arcila: Maximal $l_p$-regularity of multiterm fractional equations with delay. Math. Methods Appl. Sci. 44(1) (2021), 853-864.
\bibitem{hy} T. Hyt\"onen, J. van Neerven, M. Veraar and L. Weis: Analysis in Banach Spaces, Volume I: Martingales and Littlewood-Paley Theory. Springer International Publishing, 2016.
\bibitem{kr} E. Kreyszig: Introductory Functional Analysis with Applications. John Wiley   Sons, New York, 1989.

    \bibitem{li0} J. Lindenstrauss and L. Tzafriri: Classical Banach Spaces II. Springer, Berlin, 1996.
\bibitem{li} C. Lizama: The Poisson distribution, abstract fractional difference equations, and stability. Proc. Amer. Math. Soc. 145  (2017), no. 9, 3809-3827.
\bibitem{li1} C. Lizama: $\ell^p$-maximal regularity for fractional difference equations on UMD spaces. Math. Nachr. 288 (2015), 2079-2092.
\bibitem{li2} C. Lizama and M. Murillo-Arcila: $\ell^p$-maximal regularity for a class of fractional difference equations on UMD spaces: the case $1 < \alpha \leq2$. Banach J. Math. Anal. 11 (2017), 188-206.

\bibitem{li5} C. Lizama and M. Murillo-Arcila: Well posedness for semidiscrete abstract fractional Cauchy problems with finite delay. J. Comput. Appl. Math. 339 (2018), 356-366.

\bibitem{le} C. Leal, C. Lizama and M. Murillo-Arcila: Lebesgue regularity for nonlocal time-discrete equations with delays. Fract. Calc. Appl. Anal. 21 (2018), 696-715.
 \bibitem{mi} K. S. Miller and B. Ross: Fractional difference calculus, in: Univalent Functions, Fractional Calculus and Their Applications.  Nihon University, Koriyama, Japan, (1989), 139-152.
\bibitem{ta} V. E. Tarasov: Fractional-order difference equations for physical lattices and some applications. J. Math. Phys. 56 (2015), no. 10, 1-19.
 \bibitem{ta1} V. E. Tarasov: Fractional Liouville equation on lattice phase-space. Phys. A 421 (2015), 330-342.
 \bibitem{wu} G. C. Wu, D. Baleanu, Z. G. Deng and S. D. Zeng: Lattice fractional diffusion equation in terms of a Riesz-Caputo difference. Phys. A  438 (2015), 335-339.

\bibitem{zh} J. Zhang and S. Bu: Maximal Regularity for Fractional Difference Equations of Order $2<\alpha<3$ on UMD Spaces. Electr. J. Differ. Equ. 2024 (2024), no. 20, 1-17.
\end{thebibliography}

\end{document}